\theoremstyle{plain}
\newtheorem{theorem}{Theorem}[section]
\newtheorem{corollary}[theorem]{Corollary}
\newtheorem{lemma}[theorem]{Lemma}
\newtheorem{proposition}[theorem]{Proposition}
\newtheorem{question}{Question}[section]
\theoremstyle{definition}
\newtheorem{definition}[theorem]{Definition}
\newtheorem{notation}[theorem]{Notation}
\theoremstyle{remark}
\newtheorem{remark}[theorem]{Remark}
\newcommand{\N}{\mathbb{N}}
\newcommand{\Q}{\mathbb{Q}}
\newcommand{\C}{\mathbb{C}}
\newcommand{\A}{\mathbb{A}}
\newcommand{\B}{\mathbb{B}}
\newcommand{\G}{\mathbb{G}}
\newcommand{\K}{\mathbb{K}}
\newcommand{\TT}{\mathcal{T}}
\newcommand{\set}[2]{\left\{#1:#2\right\}}
\newcommand{\cupdot}{\mathbin{\dot{\cup}}}
\newcommand{\abs}[2][]{\left|#2\right|_{#1}}
\newcommand{\actson}{\curvearrowright}
\DeclareMathOperator{\sgn}{sgn}
\DeclareMathOperator{\id}{id}
\DeclareMathOperator{\Img}{Im}
\DeclareMathOperator{\Dom}{Dom}
\DeclareMathOperator{\Sym}{Sym}
\DeclareMathOperator{\Aut}{Aut}
\DeclareMathOperator{\Inj}{Inj}
\DeclareMathOperator{\Emb}{Emb}
\DeclareMathOperator{\End}{End}
\DeclareMathOperator{\Stab}{Stab}
\DeclareMathOperator{\Orb}{Orb}
\theoremstyle{plain}
\title{On the Zariski topology on endomorphism monoids of
  omega\nobreakdash-categorical structures}
\author{Michael Pinsker}
\thanks{This research was funded in whole or in part by the Austrian
  Science Fund (FWF) [P 32337, I 5948]. For the purpose of Open
  Access, the authors have applied a CC BY public copyright licence to
  any Author Accepted Manuscript (AAM) version arising from this
  submission. This research is also funded by the European Union (ERC,
  POCOCOP, 101071674). Views and opinions expressed are however those
  of the author(s) only and do not necessarily reflect those of the
  European Union or the European Research Council Executive Agency.
  Neither the European Union nor the granting authority can be held
  responsible for them.}
\author{Clemens Schindler}
\thanks{The second author is a recipient of a DOC Fellowship of the
  Austrian Academy of Sciences at the Institute of Discrete
  Mathematics and Geometry, TU Wien.}
\address{Institut für Diskrete Mathematik und Geometrie, FG Algebra,
  TU Wien, Austria}
\email{marula@gmx.at}
\address{Institut für Diskrete Mathematik und Geometrie, FG Algebra,
  TU Wien, Austria}
\email{clemens.schindler@tuwien.ac.at}
\keywords{Reconstruction, Zariski topology, endomorphism monoid,
  pointwise convergence topology, model-complete core}
\begin{document}
\begin{abstract}
  The endomorphism monoid of a model-theoretic structure carries two
  interesting topologies: on the one hand, the topology of pointwise
  convergence induced externally by the action of the endomorphisms on
  the domain via evaluation; on the other hand, the Zariski topology
  induced within the monoid by (non\nobreakdash-)solutions to
  equations. For all concrete endomorphism monoids of
  $\omega$-categorical structures on which the Zariski topology has
  been analysed thus far, the two topologies were shown to coincide,
  in turn yielding that the pointwise topology is the coarsest
  Hausdorff semigroup topology on those endomorphism monoids.

  We establish two systematic reasons for the two topologies to agree,
  formulated in terms of the model-complete core of the
  structure. Further, we give an example of an $\omega$-categorical
  structure on whose endomorphism monoid the topology of pointwise
  convergence and the Zariski topology differ, answering a question of
  Elliott, Jonu\v{s}as, Mitchell, Péresse and Pinsker.
\end{abstract}

\maketitle
\section{Introduction}
\label{sec:introduction}

\subsection{Motivation}
\label{sec:motivation}
Given a model-theoretic (relational) structure $\A$ with domain $A$,
the set $\End(\A)$ of all endomorphisms of $\A$ is closed under
composition of functions and thus forms a semigroup (even a
monoid). Inheriting the subspace topology of the product topology on
$A^{A}$ where each copy of $A$ is equipped with the discrete topology,
$\End(\A)$ additionally carries a topological structure which turns
out to be Polish, i.e.~separable and completely metrisable, in
particular Hausdorff. In this topology, a sequence $(f_{n})_{n\in\N}$
converges to $f$ if and only if for every $a\in A$, the sequence of
evaluations $(f_{n}(a))_{n\in\N}$ converges to $f(a)$ in the discrete
topology, i.e. if it is eventually constant with value $f(a)$. For
this reason, the topology is called the \emph{topology of pointwise
  convergence} or \emph{pointwise topology} for brevity. These two
types of structure are compatible in the sense that the composition
operation is continuous with respect to the pointwise topology; one
says that the topology is a \emph{semigroup topology}. For many
model-theoretic structures $\A$ on a countable domain, the algebraic
(semigroup) structure and the topological (Polish) structure turn out
to be so deeply intertwined that the pointwise topology is the
\emph{unique} Polish semigroup topology on $\End(\A)$. Examples
include the structure without relations (whose endomorphism monoid is
the full transformation monoid)~\cite{EJMMMP-zariski}; the random
(di\nobreakdash-)graph, the random strict partial order and the
equivalence relation with either finitely or countably many
equivalence classes of countably infinite size~\cite{EJMPP-polish}; as
well as the rational numbers with the non-strict order~\cite{PS-EndQ}.

One obvious step in the proofs of these results is to show that the
pointwise topology is the coarsest Polish semigroup topology on
$\End(\A)$. For this purpose, the authors of~\cite{EJMMMP-zariski}
transferred a notion from the theory of topological groups to the
realm of semigroups, namely the so-called \emph{Zariski topology} (or
sometimes \emph{verbal topology}),
see~\cite{Bryant-verbal,DikranjanToller-zariskimarkov,Markov-three-papers};
roughly speaking, the closed sets in this topology are given by
solution sets to identities in the language of semigroups. Hence, the
Zariski topology is an object associated to the algebraic
(semi\nobreakdash-)group structure. Considering $\End(\A)$ as an
abstract semigroup, the Zariski topology can thus be regarded as an
``internal'' object. The pointwise topology, in contrast, is defined
from the evaluations at elements of the domain of $\A$ and is thus an
``external'' object with respect to the abstract semigroup structure
of $\End(\A)$ -- precisely speaking, the pointwise topology is
associated to the semigroup action of $\End(\A)$ on $A$.

As it turns out, the Zariski topology is necessarily coarser than any
Hausdorff semigroup topology on a given semigroup. In particular, the
pointwise topology on $\End(\A)$ is always finer than the Zariski
topology. If one manages to show that the Zariski topology on
$\End(\A)$ even coincides with the pointwise topology for some
structure $\A$, one can draw two conclusions: on the one hand, the
pointwise topology can also be understood as an ``internal'' object
with respect to the abstract semigroup structure; on the other hand,
the pointwise topology then indeed is the coarsest (in particular)
Polish semigroup topology on $\End(\A)$. This technique was used
in~\cite{EJMMMP-zariski} and~\cite{EJMPP-polish} as well as,
implicitly, in~\cite{PS-EndQ}. In each instance, however, the proof
that the topologies coincide has not been particularly systematic but
tuned to the specific situation being considered, based on two sets of
rather technical sufficient conditions established
in~\cite{EJMMMP-zariski} and the ad hoc notion of so-called
\emph{arsfacere} structures introduced in~\cite{EJMPP-polish} for
which these conditions always hold. This raises the question whether
there are systematic reasons for equality of the topologies, in other
words general and more structural properties to require for $\A$ which
yield that the pointwise topology and the Zariski topology on
$\End(\A)$ coincide.

Furthermore, for each $\omega$-categorical structure $\A$ explicitly
considered thus far, it was possible to show that the pointwise
topology and the Zariski topology on $\End(\A)$ coincide, leading to
the authors of~\cite{EJMPP-polish} asking the following question which
formed another essential motivation for the present work:
\begin{question}[{\cite[Question~3.1]{EJMPP-polish}}]\label{q:ejmmp}
  Is there an $\omega$-categorical relational structure $\A$ such that
  the topology of pointwise convergence on $\End(\A)$ is strictly
  finer than the Zariski topology?
\end{question}

\subsection{Our work}
\label{sec:our-work}
We establish two new sets of sufficient conditions on a structure $\A$
under which the Zariski topology and the pointwise topology on
$\End(\A)$ coincide -- so, in particular, under which the pointwise
topology is the coarsest Polish (Hausdorff) semigroup topology on
$\End(\A)$. To this end, we give a new application of so-called
\emph{model-complete cores} which have proved to be a helpful tool not
only in the algebraic theory of constraint satisfaction
problems~\cite{wonderland} but also -- of independent purely
mathematical interest -- in the universal algebraic study of
polymorphism clones of $\omega$-categorical
structures~\cite{BKOPP,BartoPinskerDichotomy} as well as in the
Ramsey-theoretic analysis of $\omega$-categorical
structures~\cite{BodirskyRamsey}. We introduce \emph{structures with
  mobile core} -- a weakening of the standard notion of transitive
structures -- and show that for an $\omega$-categorical structure
without algebraicity whose core is mobile such that the model-complete
core of the structure is either finite or has no algebraicity itself,
the Zariski topology and the pointwise topology on its endomorphism
monoid coincide.

These two cases leave a middle ground open -- namely structures whose
model-complete core is infinite but has algebraicity. Thus, this is
where a positive answer to Question~\ref{q:ejmmp} could be found. And
indeed, we give an example of an $\omega$-categorical structure
without algebraicity whose core is mobile for which the pointwise
topology on the endomorphism monoid is strictly finer than the Zariski
topology. Being transitive as well as homogeneous in a finite
relational language, this structure shows that even these additional
standard \emph{well-behavedness} assumptions are insufficient to
guarantee that the two topologies coincide. This indicates that the
structure of the model-complete core really contains the systematic
reason for the two topologies to be equal.

In Section~\ref{sec:preliminaries}, we formally introduce the relevant
notions, in particular the Zariski topology as well as model-complete
cores. In Section~\ref{sec:two-sets-sufficient}, we prove the positive
results about finite cores and cores without algebraicity stated
above. Finally, Section~\ref{sec:counterexample} contains our
counterexample.

\section{Preliminaries}
\label{sec:preliminaries}

\subsection{Structures, homomorphisms, embeddings, automorphisms}
\label{sec:struct-homom-embedd}
For a function $f\colon A\to B$ between arbitrary sets $A,B$ and a
tuple $\bar{a}=(a_{1},\dots,a_{n})$ in $A$, we denote the
tuple\footnote{In contrast to some related works
  (like~\cite{EJMMMP-zariski,EJMPP-polish}), we denote the evaluation
  of the function $f$ at the element $a$ by $f(a)$ and write
  compositions of functions from right to left,
  i.e.~$fg:=f\circ g:=(a\mapsto f(g(a)))$.}
$(f(a_{1}),\dots,f(a_{n}))$ of evaluations by $f(\bar{a})$ for
notational simplicity.  A \emph{(relational) structure}
$\A=\langle A,(R_{i})_{i\in I}\rangle$ is a domain $A$ (in the
following always finite or countably infinite) equipped with
$m_{i}$-ary relations $R_{i}\subseteq A^{m_{i}}$. If
$\B=\langle B,(S_{i})_{i\in I}\rangle$ is another structure such that
$S_{i}$ also has arity $m_{i}$, we call a function $f\colon A\to B$ a
\emph{homomorphism} and write $f\colon\A\to\B$ if $f$ is compatible
with all $R_{i}$ and $S_{i}$, i.e.~if $\bar{a}\in R_{i}$ implies
$f(\bar{a})\in S_{i}$. A homomorphism $f\colon\A\to\A$ is called an
\emph{endomorphism} of $\A$. We denote the set of all endomorphisms of
$\A$ by $\End(\A)$; it forms a monoid with the composition operation
and the neutral element $\id_{A}$. An \emph{embedding} of $\A$ into
$\B$ is an injective homomorphism $f\colon\A\to\B$ which is
additionally compatible with the complements of $R_{i}$ and $S_{i}$,
equivalently if $f(\bar{a})\in S_{i}$ also implies $\bar{a}\in
R_{i}$. The set of all \emph{self-embeddings} of $\A$, i.e.~of all
embeddings of $\A$ into $\A$, is denoted by $\Emb(\A)$; it also forms
a monoid. An \emph{isomorphism} between $\A$ and $\B$ is a surjective
embedding from $\A$ into $\B$. The set of all \emph{automorphisms} of
$\A$, i.e.~of all isomorphisms between $\A$ and itself, is denoted by
$\Aut(\A)$; it forms a group with the composition operation, the
neutral element $\id_{A}$ and the inversion operation. In the special
case that $\A$ is the structure without any relations, the
endomorphism monoid is the full transformation monoid $A^{A}$, the
self-embedding monoid is the set $\Inj(A)$ of all injective maps
$A\to A$, and the automorphism group is the set $\Sym(A)$ of all
permutations on $A$. A weakening of isomorphic structures is given by
the following notion: Two structures $\A$ and $\B$ are called
\emph{homomorphically equivalent} if there exist homomorphisms
$g\colon\A\to\B$ and $h\colon\B\to\A$.

If $C\subseteq A$, then the \emph{induced substructure} $\C$ of $\A$
on $C$ is the structure with domain $C$ where each relation $R_{i}$ is
replaced by $R_{i}\cap C^{m_{i}}$. If $f\colon\A\to\B$ is a
homomorphism, we will in a slight abuse of notation denote the
substructure of $\B$ on the domain $f(A)$ by $f(\A)$.


\subsection{Topologies}
\label{sec:topologies}
If $S$ is a semigroup, we call a topology $\TT$ on $S$ a
\emph{semigroup topology} (and $(S,\TT)$ a \emph{topological
  semigroup}) if the operation $\cdot\colon S\times S\to S$ is a
continuous map with respect to $\TT$ (where $S\times S$ carries the
product topology).

A natural topology on $\End(\A)$ (and also on
$\Emb(\A),\Aut(\A),\Inj(A),\Sym(A)$) is given by the subspace topology
of the product topology on $A^{A}$ where each copy of $A$ is equipped
with the discrete topology, the so-called \emph{pointwise topology}
which we denote by $\TT_{pw}$ (or $\TT_{pw}\vert_{\End(\A)}$ etc.~if
misunderstandings are possible). In the sequel, we will need to
consider the topological closure of $\Aut(\A)$ with respect to the
pointwise topology within $A^{A}$ (or, equivalently, within $\End(\A)$
since the latter is itself closed in $A^{A}$) which we will call the
$\TT_{pw}$-closure of $\Aut(\A)$ for brevity. We remark that for an
$\omega$-categorical structure $\A$, this closure consists precisely
of the so-called \emph{elementary self-embeddings} of $\A$
(see~\cite{Hodges}).
    
The standard topological basis of $\TT_{pw}$ is given by the sets
\begin{displaymath}
  \set{s\in\End(\A)}{s(\bar{a})=\bar{b}},\qquad \bar{a},\bar{b}\text{
    finite tuples in }A.
\end{displaymath}
It is easy to see that $\TT_{pw}$ is a Polish semigroup topology on
$\End(\A)$.

Now we define the \emph{Zariski topology} central to this paper. For
notational simplicity, we will restrict to monoids.
\begin{definition}\label{def:zariski-top}
  Let $S$ be a monoid.
  \begin{enumerate}[label=(\roman*)]
  \item For $k,\ell\in\N$, $\ell<k$, and for
    $p_{0},\dots,p_{k},q_{0},\dots,q_{\ell}\in S$ as well as
    $\varphi(s):=p_{k}sp_{k-1}s\dots sp_{0}$ and
    $\psi(s):=q_{\ell}sq_{\ell-1}s\dots sq_{0}$ (if $\ell=0$, then
    $\psi(s)=q_{0}$ for all $s\in S$), we define
    \begin{displaymath}
      M_{\varphi,\psi}:=\set{s\in S}{\varphi(s)\neq\psi(s)}.
    \end{displaymath}
  \item The \emph{Zariski topology} on $S$, denoted by
    $\TT_{\text{Zariski}}$, is the topology generated by all sets
    $M_{\varphi,\psi}$. Explicitly, the $\TT_{\text{Zariski}}$-basic
    open sets are the finite intersections of sets $M_{\varphi,\psi}$.
  \end{enumerate}
\end{definition}
In general, the Zariski topology need not be a Hausdorff topology or a
semigroup topology, but suitable weakenings do hold. On the one hand,
it always satisfies the first separation axiom~T1: every singleton set
$\{s_{0}\}$ is $\TT_{\text{Zariski}}$-closed (pick $\varphi(s)=s=1s1$,
where $1$ denotes the neutral element of $S$, and $\psi(s)=s_{0}$). On
the other hand, the left and right \emph{translations},
$\lambda_{t}\colon S\to S$, $s\mapsto ts$ and $\rho_{t}\colon S\to S$,
$s\mapsto st$ (where $t\in S$ is fixed) are continuous with respect to
the Zariski topology: To see this, take arbitrary
$\varphi(s):=p_{k}sp_{k-1}s\dots sp_{0}$ and
$\psi(s):=q_{\ell}sq_{\ell-1}s\dots sq_{0}$ as above and note that
$\lambda_{t}^{-1}(M_{\varphi,\psi})=M_{\tilde{\varphi},\tilde{\psi}}$
where
$\tilde{\varphi}(s):=(p_{k}t)s(p_{k-1}t)s\dots s(p_{1}t)s(p_{0})$ and
$\tilde{\psi}(s):=(q_{\ell}t)s(q_{\ell-1}t)s\dots s(q_{1}t)s(q_{0})$;
similarly for $\rho_{t}$.

By a straightforward argument, the Zariski topology is coarser than
any Hausdorff semigroup topology $\TT$ on $S$: One has to show that
$M_{\varphi,\psi}$ is $\TT$-open. If $s\in M_{\varphi,\psi}$, then
$\varphi(s)\neq \psi(s)$, so there exist $U,V\in\TT$ with
$\varphi(s)\in U$, $\psi(s)\in V$ and $U\cap V=\emptyset$ since $\TT$
is Hausdorff. Then $O:=\varphi^{-1}(U)\cap\psi^{-1}(V)$ is a
$\TT$-open set (by continuity of the semigroup operation) such that
$s\in O\subseteq M_{\varphi,\psi}$.

\subsection{Homogeneity, transitivity and algebraicity}
\label{sec:homog-trans-algebr}
Several important properties of a structure $\A$ can be defined from
the canonical group action of $\Aut(\A)$ on $A^{n}$ for $n\geq 1$ by
evaluation which we write as $\Aut(\A)\actson A^{n}$. We will consider
the (pointwise) \emph{stabiliser} of a set $Y\subseteq A$ (usually
finite), that is
$\Stab(Y):=\set{\alpha\in\Aut(\A)}{\alpha(y)=y\text{ for all }y\in
  Y}$. For a tuple $\bar{a}\in A^{n}$, we further define the
\emph{orbit} of $\bar{a}$ under the action,
$\Orb(\bar{a}):=\set{\alpha(\bar{a})}{\alpha\in\Aut(\A)}$, as well as
the \emph{$Y$-relative orbit}
$\Orb(\bar{a};Y):=\set{\alpha(\bar{a})}{\alpha\in\Stab(Y)}$ where
$Y\subseteq A$. By the characterisation theorem due to Engeler,
Ryll-Nardzewski and Svenonius (see~\cite{Hodges}), a countable
structure $\A$ is $\omega$-categorical if and only if for each
$n\geq 1$, the action $\Aut(\A)\actson A^{n}$ has only finitely many
orbits. We say that $\A$ is a \emph{transitive} structure if the
action $\Aut(\A)\actson A$ has a single orbit. The structure $\A$ is
said to have \emph{no algebraicity} if for any finite $Y\subseteq A$
and any $a\in A\setminus Y$, the $Y$-relative orbit $\Orb(a;Y)$ is
infinite.  Finally, we say that $\A$ is a \emph{homogeneous} structure
if any finite partial isomorphism $m\colon\bar{a}\mapsto\bar{b}$ on
$\A$ can be extended to an automorphism $\alpha\in\Aut(\A)$. It is
easy to see that a homogeneous structure in a \emph{finite
  (relational) language}, i.e.  $\A=\langle A,(R_{i})_{i\in I}\rangle$
with $I$ finite, is automatically $\omega$-categorical.

In the sequel, an important property of $\omega$-categorical
structures without algebraicity will be the existence of ``almost
identical'' embeddings/endomorphisms which can be obtained using a
standard compactness argument.
\begin{lemma}[{\cite[Lemma~3.6]{EJMPP-polish}}]\label{lem:emb-almost-ident}
  Let $\A$ be an $\omega$-categorical structure without
  algebraicity. Then for every $a\in A$, there are $f,g$ in the
  $\TT_{pw}$-closure of $\Aut(\A)$ such that
  $f\vert_{A\setminus\{a\}}=g\vert_{A\setminus\{a\}}$ and
  $f(a)\neq g(a)$.
\end{lemma}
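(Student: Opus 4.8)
The plan is to build $f$ and $g$ directly, using the (general and easy) fact that a self-map $h$ of $A$ lies in the $\TT_{pw}$-closure of $\Aut(\A)$ if and only if $h(\bar d)\in\Orb(\bar d)$ for every finite tuple $\bar d$ from $A$: the forward direction is immediate from the basic open sets of $\TT_{pw}$, and conversely, given $\bar d$ with $h(\bar d)\in\Orb(\bar d)$ one finds $\alpha\in\Aut(\A)$ agreeing with $h$ on $\bar d$. Applying the hypothesis of no algebraicity with $Y=\emptyset$ shows $\Orb(a)$ is infinite, so $A$ is infinite; fix an enumeration $A\setminus\{a\}=\{a_1,a_2,\dots\}$. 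It then suffices to produce elements $c_1,c_2,\dots$ and $b_0\neq b_1$ of $A$ with
\begin{displaymath}
  (c_1,\dots,c_n,b_0)\in\Orb(a_1,\dots,a_n,a)\ \text{ and }\ (c_1,\dots,c_n,b_1)\in\Orb(a_1,\dots,a_n,a)\quad\text{for all }n\in\N.
\end{displaymath}
Indeed, passing to subtuples these conditions force $e\colon a_i\mapsto c_i$ to be injective with image avoiding $b_0$ and $b_1$, so $f:=e\cup\{a\mapsto b_0\}$ and $g:=e\cup\{a\mapsto b_1\}$ are injections of $A$ into itself; since every finite tuple from $A$ arises from a tuple $(a_{i_1},\dots,a_{i_m})$ or $(a_{i_1},\dots,a_{i_m},a)$ by repeating and permuting coordinates, and orbit membership is preserved under these operations and under passing to subtuples, the displayed conditions give $f(\bar d)\in\Orb(\bar d)$ and $g(\bar d)\in\Orb(\bar d)$ for all finite $\bar d$; hence $f,g$ lie in the $\TT_{pw}$-closure of $\Aut(\A)$, they agree on $A\setminus\{a\}$, and $f(a)=b_0\neq b_1=g(a)$.

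Next I check that the finite approximations to the displayed requirement are nonempty, which is where no algebraicity enters. Fix $n$ and set
\begin{displaymath}
  X_n:=\set{(d_1,\dots,d_n,u_0,u_1)\in A^{n+2}}{u_0\neq u_1,\ (d_1,\dots,d_n,u_0)\in\Orb(a_1,\dots,a_n,a),\ (d_1,\dots,d_n,u_1)\in\Orb(a_1,\dots,a_n,a)}.
\end{displaymath}
Since $\{a_1,\dots,a_n\}$ is finite and $a\notin\{a_1,\dots,a_n\}$, the relative orbit $\Orb(a;\{a_1,\dots,a_n\})$ is infinite, so it contains some $a'\neq a$, say $a'=\gamma(a)$ with $\gamma\in\Stab(\{a_1,\dots,a_n\})$. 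Then $\gamma$ maps $(a_1,\dots,a_n,a)$ to $(a_1,\dots,a_n,a')$, so $(a_1,\dots,a_n,a,a')\in X_n$ and in particular $X_n\neq\emptyset$.

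It remains to glue the $X_n$ coherently, which is a standard compactness step and the place where $\omega$-categoricity is essential. Each $X_n$ is a union of $\Aut(\A)$-orbits on $A^{n+2}$, of which there are only finitely many, so $X_n$ consists of finitely many orbits. Forgetting the $(n{+}1)$-st coordinate maps $X_{n+1}$ into $X_n$ (subtuples of orbit-equivalent tuples are orbit-equivalent) and induces maps between the associated finite sets of orbits; by König's lemma this inverse system of nonempty finite sets has a coherent branch, i.e.\ there are orbits $O_n\subseteq X_n$ with $O_n$ the image of $O_{n+1}$ under that projection. Since $\Aut(\A)$ acts transitively on each orbit, the projection maps $O_{n+1}$ onto $O_n$; so we may pick $(c_1,b_0,b_1)\in O_1$ and then inductively $(c_1,\dots,c_{n+1},b_0,b_1)\in O_{n+1}$ projecting onto the already chosen $(c_1,\dots,c_n,b_0,b_1)$, which leaves $c_1,\dots,c_n,b_0,b_1$ unchanged at each step. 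The union over $n$ yields the required $c_1,c_2,\dots$ and $b_0\neq b_1$.

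The main obstacle is the necessity of committing to $b_0$ and $b_1$ from the start: a naive greedy construction extending $e$ one point at a time could be forced to change the witnesses for the image of $a$ at every stage, leaving the limiting map undefined at $a$; passing to a compactness argument over the finitely many orbits supplied by $\omega$-categoricity is precisely what removes this difficulty, while no algebraicity is exactly what keeps each $X_n$ nonempty.
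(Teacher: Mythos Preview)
Your proof is correct. The paper does not actually give its own proof of this lemma; it is imported from \cite[Lemma~3.6]{EJMPP-polish} with the sole comment that it ``can be obtained using a standard compactness argument.'' Your argument is precisely such a compactness argument---using no algebraicity to make each finite approximation $X_n$ nonempty, and $\omega$-categoricity to reduce to an inverse system of nonempty finite orbit sets so that K\"onig's lemma applies---so it matches the approach the paper alludes to.
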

If $f$ and $g$ are as in the previous lemma, then for any
$s\in\End(\A)$ we note that $a\in\Img(s)$ if and only if $sf\neq
sg$. This yields the following fact which will be at the heart of both
proofs in Section~\ref{sec:two-sets-sufficient}.
\begin{lemma}[{contained in~\cite[Proof of
    Lemma~5.3]{EJMMMP-zariski}}]\label{lem:image-zariski-open}
  Let $\A$ be an $\omega$-categorical structure without
  algebraicity. Then for every $a\in A$, the set
  $\set{s\in\End(\A)}{a\in\Img(s)}$ is open in the Zariski topology on
  $\End(\A)$.
\end{lemma}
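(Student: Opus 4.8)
The plan is to reduce the ``external'' condition $a\in\Img(s)$ to the failure of an identity in the monoid $\End(\A)$, using the pair of almost-identical maps supplied by Lemma~\ref{lem:emb-almost-ident}, and then to recognise the resulting set as a basic open set of the Zariski topology.

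First I would fix $a\in A$ and apply Lemma~\ref{lem:emb-almost-ident} to obtain $f,g$ in the $\TT_{pw}$-closure of $\Aut(\A)$ with $f\vert_{A\setminus\{a\}}=g\vert_{A\setminus\{a\}}$ and $f(a)\neq g(a)$. Since the $\TT_{pw}$-closure of $\Aut(\A)$ is contained in $\End(\A)$, both $f$ and $g$ are endomorphisms of $\A$; in particular they may legitimately appear as coefficients of words in the sense of Definition~\ref{def:zariski-top}.

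Next I would verify the equivalence already recorded just before the statement: for every $s\in\End(\A)$, one has $a\in\Img(s)$ if and only if $fs\neq gs$ (here $fs=f\circ s$ and $gs=g\circ s$, with the right-to-left composition convention of this paper). Indeed, if $s(b)=a$ for some $b\in A$, then $(fs)(b)=f(a)\neq g(a)=(gs)(b)$; conversely, if $(fs)(b)\neq(gs)(b)$ for some $b$, then $f$ and $g$ disagree at the point $s(b)$, and since they agree on all of $A\setminus\{a\}$ this forces $s(b)=a$, whence $a\in\Img(s)$. Taking $\varphi(s):=fs$ and $\psi(s):=gs$ (the words $\varphi(s)=f\,s\,\id_{A}$ and $\psi(s)=g\,s\,\id_{A}$), this equivalence says precisely that $\set{s\in\End(\A)}{a\in\Img(s)}$ equals $M_{\varphi,\psi}$, which is $\TT_{\text{Zariski}}$-open by definition.

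The only non-routine point is the first reduction: converting the containment $a\in\Img(s)$, which a priori refers to the action of $s$ on the domain of $\A$, into the inequality of two monoid words in $s$. This is exactly what Lemma~\ref{lem:emb-almost-ident} provides, the no-algebraicity hypothesis being used there (through a compactness argument) to manufacture two endomorphisms that differ in precisely one prescribed coordinate. I would also flag the small subtlety that $f$ and $g$ must be applied \emph{after} $s$: composing them before $s$ instead merely detects whether $s$ identifies the two points $f(a)$ and $g(a)$, which is unrelated to $\Img(s)$.
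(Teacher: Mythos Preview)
Your proof is correct and follows exactly the approach the paper has in mind: use the pair $f,g$ from Lemma~\ref{lem:emb-almost-ident} as coefficients and observe that $\set{s\in\End(\A)}{a\in\Img(s)}=M_{\varphi,\psi}$ for $\varphi(s)=fs$, $\psi(s)=gs$. In fact you have the order of composition right where the paper's one-line argument preceding the lemma does not: with the stated convention $fg=f\circ g$, the sentence ``$a\in\Img(s)$ if and only if $sf\neq sg$'' should read $fs\neq gs$ (as you correctly note, $sf\neq sg$ merely detects whether $s$ separates $f(a)$ and $g(a)$), so your version is the accurate one.
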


\subsection{Cores}
\label{sec:cores}
A structure $\C$ is called a \emph{model-complete core} if\footnote{In
  the case that $\C$ is $\omega$-categorical, this means that any
  endomorphism of $\C$ is an elementary self-embedding.} the
endomorphism monoid $\End(\C)$ coincides with the $\TT_{pw}$-closure
of the automorphism group $\Aut(\C)$. If $\C$ is finite, this means
$\End(\C)=\Aut(\C)$. Every $\omega$-categorical structure has a
homomorphically equivalent model-complete core structure:
\begin{theorem}[originally {\cite[Theorem~16]{Cores-journal}},
  alternative proof in
  {\cite[Theorem~5.7]{BKOPP-equations}}]\label{thm:existence-cores}
  Let $\A$ be an $\omega$-categorical structure. Then there exists a
  model-complete core $\C$ such that $\A$ and $\C$ are homomorphically
  equivalent. Moreover, $\C$ is either $\omega$-categorical or finite
  and uniquely determined (up to isomorphism).
\end{theorem}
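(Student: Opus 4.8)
The plan is to obtain $\C$ as the substructure $e(\A)$ induced on the image of a well-chosen endomorphism $e\in\End(\A)$. Homomorphic equivalence of $\A$ and $e(\A)$ is then automatic -- it is witnessed by $e\colon\A\to e(\A)$ together with the inclusion $e(\A)\hookrightarrow\A$ -- so everything lies in choosing $e$ suitably and in afterwards verifying $\omega$-categoricity or finiteness and uniqueness.

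I would choose $e$ to have, in an appropriate sense, a \emph{minimal image}. In the finite case this is literal: pick $e\in\End(\A)$ with $|e(A)|$ least, put $C:=e(A)$ and let $\C$ be the induced substructure on $C$; then for any $h\in\End(\C)$ the composite $h\circ e$ is an endomorphism of $\A$ with image $h(C)\subseteq C$, so minimality forces $h(C)=C$, whence $h$ is a bijective endomorphism of the finite structure $\C$ and therefore an automorphism -- so $\C$ is a finite, automatically model-complete, core. For infinite $\A$ the size of the image carries no information, so I would exploit $\omega$-categoricity instead and record, for $f\in\End(\A)$ and each $n$, the \emph{finite} set $\pi_{n}(f)$ of orbits of $\Aut(\A)\actson A^{n}$ met by $f(A)^{n}$, aiming for an $e$ that is minimal in the sense that precomposition with any endomorphism removes no orbit from the image at any arity, i.e.\ $\pi_{n}(e\circ f)=\pi_{n}(e)$ for all $f\in\End(\A)$ and all $n$. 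Producing such an $e$ is delicate precisely because endomorphisms need not respect orbits, so a naive ``keep shrinking'' iteration need not terminate and a naive intersection over finitely many arities need not be realised by one endomorphism; a genuine compactness argument, using that only finitely many orbits enter at each arity, is required. Replacing $e$ by a suitable power, I may also arrange $e\circ e=e$, so that $e$ fixes $C:=e(A)$ pointwise and $\C:=e(\A)$ is a retract of $\A$.

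Granting such an $e$, I would extract the core property as in the finite case. Fix $h\in\End(\C)$. Since $e$ is idempotent, $e\circ h=h$ on $C$, so $h\circ e=e\circ(h\circ e)$, and likewise $h^{k}\circ e=e\circ(h^{k}\circ e)$ for every $k$; by minimality of $e$ this gives $\pi_{n}(h^{k}\circ e)=\pi_{n}(e)$ for all $k$ and $n$, i.e.\ the image $h^{k}(C)$ -- though contained in $C$ -- meets exactly the same $\Aut(\A)$-orbits as $C$ does, at every arity. From this I would argue that $h$ can neither identify two elements nor fail to reflect a relation: any such defect, once iterated, would eventually delete an orbit from the image at some arity, a contradiction. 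Hence every endomorphism of $\C$ is an embedding. That $\C$ is finite when $C$ is finite is clear; that it is $\omega$-categorical when $C$ is infinite I would deduce from the minimality of $e$ together with the $\omega$-categoricity of $\A$.

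Finally one promotes ``core'' to ``model-complete core'' -- that is, $\End(\C)$ equals the $\TT_{pw}$-closure of $\Aut(\C)$ -- by a further argument upgrading the embeddings of $\C$ to elementary self-embeddings; and uniqueness follows by a back-and-forth argument: if $\C$ and $\C'$ are both model-complete cores homomorphically equivalent to $\A$, hence to each other, then composing the connecting homomorphisms yields endomorphisms of $\C$ and of $\C'$ lying in the respective $\TT_{pw}$-closures of the automorphism groups, hence elementary self-embeddings, and a standard back-and-forth between the countable $\omega$-categorical structures $\C$ and $\C'$ (or between two finite, mutually embeddable structures) then produces an isomorphism. The main obstacle I anticipate is the interplay at the heart of the argument -- isolating the right notion of ``minimal image'', proving by compactness that such an $e$ exists, and converting that minimality into the statement that every endomorphism of $\C$ is an embedding -- since the naive version of the last step fails: a substructure of $\C$ realising all orbits of $\C$ need not be isomorphic to $\C$.
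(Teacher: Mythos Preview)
The paper does not prove this theorem; it is quoted from the literature (with two references), so there is no ``paper's own proof'' to compare against. I will therefore assess your sketch on its merits.

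Your plan has a structural flaw at the very first step: you propose to realise the model-complete core as $e(\A)$ for some endomorphism $e$, and later even as a retract via an idempotent $e$. But the model-complete core of an $\omega$-categorical structure need not arise as the image of any endomorphism. The paper itself supplies the counterexample immediately after Lemma~\ref{lem:core-hom-emb}: take $\A=\langle\Q\cup\{\pm\infty\},<\rangle$. Its model-complete core is $\langle\Q,<\rangle$, yet every endomorphism of $\A$ has an image with a greatest and a least element, so no $e(\A)$ is isomorphic to $\langle\Q,<\rangle$. In particular the core is not a retract, and no ``minimal image'' endomorphism, however cleverly chosen, will land on it. Your entire framework of tracking $\pi_n(e)$ and shrinking images therefore cannot produce the core in general.

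A secondary problem: even were the retract approach viable, ``replacing $e$ by a suitable power'' does not yield idempotency for endomorphisms of infinite structures (think of a shift); one would need a genuine limit/compactness argument in the closure of $\{e^k:k\in\N\}$, not a power. The actual proofs in the cited references proceed differently: one route works on the level of types/orbits rather than elements, constructing $\C$ abstractly as a structure whose $n$-types correspond to a suitably minimal closed $\End(\A)$-invariant set of $\Aut(\A)$-orbits, and only afterwards realises $\C$ as a substructure of $\A$ via the embedding of Lemma~\ref{lem:core-hom-emb}. Your minimality intuition is in the right direction, but it must be implemented at the type level, not as an endomorphism image.
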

Because of the uniqueness result, $\C$ is commonly referred to as
\emph{the} model-complete core of $\A$.  We will repeatedly use the
following simple property of model-complete cores:
\begin{lemma}\label{lem:core-hom-emb}
  Let $\A$ be an $\omega$-categorical structure and let $\C$ be its
  model-complete core. Then any homomorphism $f\colon\C\to\A$ is an
  embedding.
\end{lemma}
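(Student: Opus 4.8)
The plan is to use homomorphic equivalence to convert $f$ into an endomorphism of $\C$, exploit the defining property of model-complete cores, and then transfer the conclusion back to $f$.

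First, recall that $\C$ being the model-complete core of $\A$ entails (by Theorem~\ref{thm:existence-cores}) that $\C$ and $\A$ are homomorphically equivalent; in particular there is a homomorphism $h\colon\A\to\C$. Composing, $h\circ f\colon\C\to\C$ is an endomorphism of $\C$. Next, since $\C$ is a model-complete core, $\End(\C)$ coincides with the $\TT_{pw}$-closure of $\Aut(\C)$; by the remark in Section~\ref{sec:topologies} this closure consists of (elementary) self-embeddings of $\C$ when $\C$ is $\omega$-categorical, and if $\C$ is finite then $\End(\C)=\Aut(\C)$, so in either case every element of $\End(\C)$ is an embedding of $\C$ into itself. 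Hence $h\circ f$ is an embedding: it is injective and reflects every relation.

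It remains to transfer these two properties back to $f$. Injectivity of $h\circ f$ immediately forces injectivity of $f$. For relation-reflection, suppose $f(\bar{a})\in R_{i}^{\A}$ for some index $i$ and some tuple $\bar{a}$ from $\C$; since $h$ is a homomorphism, $h(f(\bar{a}))\in R_{i}^{\C}$, that is, $(h\circ f)(\bar{a})\in R_{i}^{\C}$, and since $h\circ f$ is an embedding this yields $\bar{a}\in R_{i}^{\C}$. Thus $f$ is an injective homomorphism that reflects all relations, i.e.\ an embedding, as desired.

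I do not anticipate any real obstacle here; the proof is short once the right composition is formed. The only point that warrants a word of care is the justification that every endomorphism of a model-complete core is in fact an embedding, but this is immediate from the definition of model-complete core together with the cited description of the $\TT_{pw}$-closure of $\Aut(\C)$ (and is trivial in the finite case).
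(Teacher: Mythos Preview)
Your proof is correct and follows essentially the same approach as the paper: compose $f$ with a homomorphism $\A\to\C$, use the model-complete core property to conclude the composite is a self-embedding, and deduce that $f$ itself is an embedding. The only difference is that you spell out the final implication (injectivity and relation reflection) in detail, whereas the paper leaves it as a one-line observation.
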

\begin{proof}
  If $g\colon\A\to\C$ denotes the homomorphism existing by homomorphic
  equivalence, then $gf$ is an endomorphism of $\C$ and thus contained
  in the $\TT_{pw}$-closure of $\Aut(\C)$, in particular a
  self-embedding. This is only possible if $f$ is an embedding.
\end{proof}
This lemma in particular applies to the homomorphism $h\colon\C\to\A$
yielded by homomorphic equivalence. Replacing $\C$ by its isomorphic
copy $h(\C)$, we will subsequently assume that $\C$ is a substructure
of $\A$. Note that depending on the structure $\A$, it can but need
not be possible to pick the homomorphism $g\colon\A\to\C$ to be
surjective. For instance, the model-complete core of the random graph
is the complete graph on countably many vertices, and any surjection
from the random graph to the complete graph is a surjective
homomorphism.
    
On the other hand, if $\A$ is given by the rational numbers $\Q$
extended by two elements $\pm\infty$, equipped with the canonical
strict order, then the model-complete core of $\A$ is precisely
$\langle\Q,<\rangle$ which cannot coincide with any homomorphic image
of $\A$ since such an image would have a greatest and a least
element. If the model-complete core of $\A$ is finite, however,
\emph{any} homomorphism $g\colon\A\to\C$ is surjective, as can be seen
by viewing $g$ as an endomorphism of $\A$ and applying the following
lemma we will also use later on:
\begin{lemma}\label{lem:finite-core-image-minimal}
  If the model-complete core of an $\omega$-categorical structure $\A$
  is finite of size $n$, then the image of any endomorphism of $\A$
  has size at least $n$.
\end{lemma}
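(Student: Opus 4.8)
The plan is to show that if $\C\subseteq\A$ is the finite model-complete core of size $n$ and $s\in\End(\A)$, then $\lvert\Img(s)\rvert\geq n$, by producing an injective map from $C$ into $\Img(s)$. First I would invoke homomorphic equivalence to fix a homomorphism $g\colon\A\to\C$; since $\C$ is finite, $g$ restricted to $\C$ is an endomorphism of $\C$, hence (as $\C$ is a model-complete core with $\End(\C)=\Aut(\C)$) an automorphism of $\C$, so in particular $g(C)=C$ and $g\vert_C$ is a bijection of $C$. Now consider the composite $g\circ s\vert_C\colon\C\to\C$: it is a homomorphism from $\C$ to $\C$, again therefore an automorphism of $\C$, in particular injective on $C$.

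From injectivity of $g\circ s$ on $C$ it follows immediately that $s$ itself is injective on $C$: if $s(c_1)=s(c_2)$ for $c_1,c_2\in C$ then $g(s(c_1))=g(s(c_2))$, forcing $c_1=c_2$. Hence $s(C)\subseteq\Img(s)$ has exactly $n$ elements, and therefore $\lvert\Img(s)\rvert\geq n$, which is the claim. Since the inclusion $\C\hookrightarrow\A$ is the one coming from Lemma~\ref{lem:core-hom-emb} (we assumed $\C$ is a substructure of $\A$), no extra care is needed about which copy of the core we use.

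The only mild subtlety — and the single place the argument could go wrong if stated carelessly — is the step asserting that a homomorphism from the finite model-complete core $\C$ to itself is automatically a bijection. This is exactly the finite case of the defining property of a model-complete core: $\End(\C)$ equals the $\TT_{pw}$-closure of $\Aut(\C)$, and for finite $\C$ that closure is just $\Aut(\C)$, so every endomorphism of $\C$ is an automorphism. I would simply cite this (it is recorded in Subsection~\ref{sec:cores}) rather than reprove it. Everything else is a one-line diagram chase, so there is no real computational content; the proof is essentially the observation that $s\vert_C$ has a left inverse given by the automorphism $(g\circ s\vert_C)^{-1}\circ g\vert_C$ of $\C$.
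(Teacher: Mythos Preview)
Your proof is correct. It differs from the paper's argument in a small but notable way: the paper observes that $s(\A)$ is homomorphically equivalent to $\A$ and then invokes the \emph{uniqueness} of the model-complete core (Theorem~\ref{thm:existence-cores}) to conclude that a copy of $\C$ sits inside $s(\A)$, whence $\lvert s(A)\rvert\geq n$. You instead work directly: you show that $s\vert_C$ is injective by noting that $g\circ s\vert_C\in\End(\C)=\Aut(\C)$, which is precisely the mechanism behind Lemma~\ref{lem:core-hom-emb} applied to the homomorphism $s\vert_C\colon\C\to\A$. Your route is arguably cleaner here since it avoids appealing to the uniqueness statement (and the implicit question of whether $s(\A)$ is itself $\omega$-categorical); the paper's phrasing, on the other hand, emphasises the conceptual point that the core of $\A$ and the core of any endomorphic image coincide. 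Both arguments are one-liners at heart, and yours could be shortened further by simply citing Lemma~\ref{lem:core-hom-emb} for the injectivity of $s\vert_C$ rather than reproving it.
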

\begin{proof}
  If $s\in\End(\A)$, then $s(\A)$ is homomorphically equivalent to
  $\A$. Hence, $s(\A)$ and $\A$ have the same model-complete core
  which can therefore be regarded as a substructure of $s(\A)$.
\end{proof}

\section{Two sets of sufficient conditions}
\label{sec:two-sets-sufficient}
This section is devoted to stating and showing our sufficient
conditions, expressed in terms of the model-complete core, for the
pointwise topology and the Zariski topology to coincide, see
Theorem~\ref{thm:suff-cond-zariski-equ-pw}.

\subsection{Our results}
\label{sec:struct-with-mobile}
An essential notion for our results is given by \emph{structures with
  mobile core}:
\begin{definition}
  Let $\A$ be an $\omega$-categorical structure. Then $\A$ is said to
  have a \emph{mobile core} if any element of $\A$ is contained in the
  image of an endomorphism into the model-complete core. Explicitly,
  for any $a\in\A$, there ought to exist a substructure $\C$ of $\A$
  and $g\in\End(\A)$ with the following properties:
  \begin{enumerate}[label=(\roman*)]
  \item $\C$ is a model-complete core homomorphically equivalent to
    $\A$,
  \item $a\in g(A)\subseteq C$.
  \end{enumerate}
\end{definition}
Note that structures with mobile core are a weakening of transitive
structures (as introduced in Subsection~\ref{sec:homog-trans-algebr}):
Let $\A$ be transitive, let $\C$ be its model-complete core with
homomorphism $g\colon\A\to\C$, and let $a_{0}\in A$ be a fixed
element. If $a\in A$ is arbitrary, then transitivity yields
$\alpha\in\Aut(\A)$ such that $\alpha(g(a_{0}))=a$. Hence,
$\widetilde{\C}:=\alpha(\C)$ is an isomorphic copy of $\C$ with
homomorphism $\tilde{g}:=\alpha g\colon\A\to\widetilde{\C}$ such that
$a\in\tilde{g}(A)\subseteq\widetilde{C}$. In fact, it suffices to
assume that $\A$ is \emph{weakly transitive}, i.e.~that for all
$a,b\in A$ there exists $s\in\End(\A)$ with $s(a)=b$ -- replacing
$\alpha$ in the above argument by $s$, we still obtain that $s(\C)$ is
an isomorphic copy of $\C$ by Lemma~\ref{lem:core-hom-emb}.
    
On the other hand, there exist non-transitive structures which have a
mobile core, for instance the disjoint union of two transitive
structures where each part gets named by an additional unary predicate
(to ascertain that the parts are invariant under any automorphism).
Finally, the structure $\langle\Q\cup\{\pm\infty\},<\rangle$ mentioned
after Lemma~\ref{lem:core-hom-emb} does not have a mobile core: The
element $+\infty$ cannot be contained in any copy of the
model-complete core $\langle\Q,<\rangle$.
    
Now we can formally state the main result of this section.
\begin{theorem}\label{thm:suff-cond-zariski-equ-pw}
  Let $\A$ be an $\omega$-categorical structure without algebraicity
  which has a mobile core. Then the Zariski topology on $\End(\A)$
  coincides with the pointwise topology if one of the following two
  conditions holds:
  \begin{enumerate}[label=(\roman*)]
  \item\label{item:suff-cond-zariski-equ-pw-i} EITHER the
    model-complete core of $\A$ is finite,
  \item\label{item:suff-cond-zariski-equ-pw-ii} OR the model-complete
    core of $\A$ is infinite and does not have algebraicity.
  \end{enumerate}
\end{theorem}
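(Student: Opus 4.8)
The plan is to show that the pointwise topology is contained in the Zariski topology; the reverse inclusion holds for free since $\TT_{pw}$ is a Hausdorff semigroup topology. A subbasis of $\TT_{pw}$ is given by the sets $\set{s\in\End(\A)}{s(\bar a)=\bar b}$; using continuity of left and right translations in $\TT_{\text{Zariski}}$ and the fact that $\TT_{\text{Zariski}}$ satisfies T1, it suffices to show that for each $a\in A$ the ``point evaluation'' sets are Zariski-open in an appropriate sense — concretely, I would reduce, as in \cite{EJMMMP-zariski}, to proving that for every $a,b\in A$ the set $\set{s\in\End(\A)}{s(a)=b}$ is Zariski-open, or the slightly weaker local statement that every basic $\TT_{pw}$-neighbourhood of a fixed $s_0$ contains a Zariski-neighbourhood of $s_0$. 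The key available tool is Lemma~\ref{lem:image-zariski-open}: for each $a\in A$, $\set{s\in\End(\A)}{a\in\Img(s)}$ is Zariski-open, which in turn comes from Lemma~\ref{lem:emb-almost-ident} and the observation that $a\in\Img(s)$ iff $sf\neq sg$ for the almost-identical pair $f,g$ at $a$.

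First I would fix $s_0\in\End(\A)$ and a basic $\TT_{pw}$-neighbourhood, i.e.\ a finite tuple $\bar a$ in $A$ and $\bar b=s_0(\bar a)$, and aim to trap $s_0$ inside a Zariski-basic open set contained in $\set{s}{s(\bar a)=\bar b}$. Because $\A$ has a mobile core, I can pick a copy $\C$ of the model-complete core and an endomorphism $g\colon\A\to\A$ with $\Img(g)\subseteq C$ and with the finitely many entries of $s_0(\bar a)$ lying in $\Img(g)$ — more precisely I would first post-compose: since $g(\A)\cong\C$ and $\C$ is the core, $g$ restricted to a copy of the core is an embedding, and I can arrange (using mobility applied to each relevant element, together with homogeneity-type movement inside the core) that a single endomorphism $h$ with image inside a copy of $\C$ satisfies $h(s_0(a_i))$ determined. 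The idea is to trade the condition $s(a_i)=b_i$ for the condition $(hs)(a_i)=h(b_i)$ together with ``$s(a_i)\in\Img$ of something'' constraints, where the latter are Zariski-open by Lemma~\ref{lem:image-zariski-open}, and the former, living essentially inside the core, can be handled because in a model-complete core all endomorphisms are elementary self-embeddings, so orbit-membership is controlled. Splitting into the two cases: in case \ref{item:suff-cond-zariski-equ-pw-i} the core is finite, so $\End(\C)=\Aut(\C)$ is finite, $\Img(hs)$ has size $\geq n$ by Lemma~\ref{lem:finite-core-image-minimal}, and the condition that $hs$ agrees with a fixed finite map on $\bar a$ can be encoded by finitely many equations/inequations of the form $\varphi(s)\neq\psi(s)$ because there are only finitely many candidate maps; in case \ref{item:suff-cond-zariski-equ-pw-ii} the core is infinite without algebraicity, so Lemma~\ref{lem:emb-almost-ident} applies inside (a copy of) $\C$ as well, giving almost-identical elementary self-embeddings of the core that can be used to separate the value $s(a_i)$ from any wrong value $b_i'$ via a Zariski-open condition.

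The main obstacle I expect is the bookkeeping that converts ``$s(a_i)=b_i$'' into Zariski-open conditions \emph{uniformly} while only having mobility (not transitivity): a single endomorphism $g$ into the core need not capture all entries of $\bar a$ simultaneously in a controlled way, so I anticipate needing to iterate mobility, or to compose several core-valued endomorphisms, and to argue that the relevant copy of the core can be chosen to contain a prescribed finite set — this is exactly the point where ``without algebraicity'' for $\A$ (to get the almost-identical endomorphisms of Lemma~\ref{lem:emb-almost-ident}) and the hypothesis on the core interact. A secondary subtlety is that in case \ref{item:suff-cond-zariski-equ-pw-ii} one must verify that the almost-identical pair produced for $\C$ can be viewed inside $\End(\A)$ (i.e.\ extended or interpreted as honest endomorphisms of $\A$), for which I would again use homomorphic equivalence $\A\leftrightarrows\C$ to pull the $\C$-self-embeddings back to $\End(\A)$. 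Once these encodings are in place, the finite intersection of Zariski-open sets so obtained is a Zariski-neighbourhood of $s_0$ sitting inside the chosen $\TT_{pw}$-basic set, which proves $\TT_{pw}\subseteq\TT_{\text{Zariski}}$ and hence equality.
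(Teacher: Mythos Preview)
Your overall strategy is right: reduce to showing each $\set{s}{s(a)=b}$ is Zariski-open (for single $a,b$—so your later worry about fitting a whole tuple $\bar a$ into one core-copy is a phantom obstacle) and use Lemma~\ref{lem:image-zariski-open} as the basic building block. The substantive gaps lie in how you combine the pieces in each case.

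In case~\ref{item:suff-cond-zariski-equ-pw-i} you compose on the wrong side. Passing to $hs$ with $\Img(h)\subseteq C$, the condition $hs(a)=h(b)$ only confines $s(a)$ to the infinite fibre $h^{-1}(h(b))$, and there is no useful finiteness of ``candidate maps'' for $hs$ (its domain is still all of $A$). The paper right-composes instead: mobility for the single point $a$ gives $g$ with $a\in g(A)=C=\{a_1,\dots,a_n\}$; via Lemma~\ref{lem:image-zariski-open} and continuity of $\rho_g$ the set $U:=\set{s}{\Img(sg)=\{s_0(a_1),\dots,s_0(a_n)\}}$ is Zariski-open, and now $Ug$ is \emph{finite} because $sg$ is determined by $s|_C$, which for $s\in U$ is a bijection onto a fixed $n$-element set. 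One then excises the finitely many bad values of $sg$ by equations $sg\neq t$. In case~\ref{item:suff-cond-zariski-equ-pw-ii} your ``almost-identical self-embeddings of the core'' again only deliver image-membership conditions, not $s(a)=b$. The missing construction is a different consequence of no-algebraicity of $\C$ (Lemma~\ref{lem:no-alg-two-copies}): self-embeddings $f,h$ of $\C$ with $f(a)=h(a)=a$ and $f(C)\cap h(C)=\{a\}$. Setting $\gamma_1:=fg$, $\gamma_2:=hg$ one verifies the hypotheses of Lemma~\ref{lem:technical-EJMMMP}; the decisive step uses Lemma~\ref{lem:core-hom-emb} to see that $s|_C$ is injective for every $s\in\End(\A)$, so if $s^{-1}(x)$ meets both $\Img(\gamma_1)\subseteq f(C)$ and $\Img(\gamma_2)\subseteq h(C)$ then the two witnesses in $C$ must coincide and hence lie in $f(C)\cap h(C)=\{a\}$, forcing $x=s(a)$. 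Neither this ``two copies meeting in one point'' idea nor the injectivity input from Lemma~\ref{lem:core-hom-emb} appears in your plan.
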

The cases~\ref{item:suff-cond-zariski-equ-pw-i}
and~\ref{item:suff-cond-zariski-equ-pw-ii} will be treated separately
in Subsections~\ref{sec:finite-cores} and~\ref{sec:cores-with-algebr},
respectively. Before we get to the proofs, we show how
Theorem~\ref{thm:suff-cond-zariski-equ-pw} can be used to easily
verify that the Zariski topology and the pointwise topology coincide
on the endomorphism monoids of a multitude of example structures. Some
of them have been treated in~\cite{EJMPP-polish}, but our result
applies to many other structures which have not yet been considered,
e.g. the random $n$-clique-free graph with or without loops.
\begin{corollary}\label{cor:application}
  Let $\A$ be one of the following structures:
  \begin{enumerate}[label=(\roman*)]
  \item\label{item:application-i} $\langle\Q,\leq\rangle$
  \item\label{item:application-ii} the random reflexive partial order
  \item\label{item:application-iii} the equivalence relation with
    either finitely or countably many equivalence classes of countable
    size (for the case of a single class, this includes the complete
    graph on countably many vertices with loops)
  \item\label{item:application-iv} the random (di\nobreakdash-)graph
    with loops
  \item\label{item:application-v} the random $n$-clique-free graph
    with loops
  \item\label{item:application-vi} $\langle\Q,<\rangle$
  \item\label{item:application-vii} the random strict partial order
  \item\label{item:application-viii} the random tournament
  \item\label{item:application-ix} the \emph{irreflexive} equivalence
    relation with either finitely or countably many equivalence
    classes of countable size (for the case of a single class, this
    includes the complete graph on countably many vertices without
    loops)
  \item\label{item:application-x} the random (di\nobreakdash-)graph
    without loops
  \item\label{item:application-xi} the random $n$-clique-free graph
    without loops
  \end{enumerate}
  Then the pointwise topology and the Zariski topology on $\End(\A)$
  coincide. In particular, the pointwise topology is the coarsest
  Hausdorff semigroup topology on $\End(\A)$.
\end{corollary}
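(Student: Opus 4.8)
The plan is to verify, for each structure in the list, that it satisfies the hypotheses of Theorem~\ref{thm:suff-cond-zariski-equ-pw}: namely that it is $\omega$-categorical, has no algebraicity, has a mobile core, and that its model-complete core is either finite or infinite without algebraicity. Once these are checked, the equality of the two topologies is immediate, and the ``in particular'' clause follows because the pointwise topology is Polish and hence Hausdorff, while the Zariski topology is coarser than every Hausdorff semigroup topology (as recalled in Subsection~\ref{sec:topologies}).

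First I would observe that all eleven structures are homogeneous in a finite relational language (they are Fra\"iss\'e limits of well-known amalgamation classes, or $\langle\Q,<\rangle$ / $\langle\Q,\leq\rangle$), hence $\omega$-categorical. Next, each is transitive: the automorphism group acts transitively on the domain in every case (for the equivalence relations and the (di-)graphs this is standard; for the orders it follows from homogeneity applied to the partial isomorphism matching any one point to any other). Since transitive structures have a mobile core, condition on the core's mobility is automatic. That each structure has no algebraicity is again a routine consequence of homogeneity together with the fact that the relevant amalgamation class permits, for any finite configuration and any new point, infinitely many pairwise non-isomorphic one-point extensions realising the same type over the finite set --- for the irreflexive/reflexive random graph, $n$-clique-free graph, tournament, partial orders this is the standard ``generic extension'' argument, and for the dense orders and the equivalence relations with infinite classes it is clear.

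The remaining, and genuinely case-dependent, step is to identify the model-complete core of each structure and check it falls into case~\ref{item:suff-cond-zariski-equ-pw-i} or~\ref{item:suff-cond-zariski-equ-pw-ii}. For the reflexive structures \ref{item:application-ii}--\ref{item:application-v} the core is finite: the random reflexive partial order and the reflexive random (di-)graph retract onto a single looped point, the reflexive $n$-clique-free graph onto a looped point likewise (a constant map is an endomorphism once loops are present), and the equivalence relation with $k$ classes ($k$ finite or countable) retracts onto $k$, respectively $\omega$, looped points with no edges between distinct classes --- when $k$ is finite this is a finite core, and when $k=\omega$ the core is the countable structure ``$\omega$ looped points, no cross-class edges'', which is a model-complete core without algebraicity. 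Item~\ref{item:application-i}, $\langle\Q,\leq\rangle$, is itself a model-complete core (it is homogeneous and every endomorphism is an embedding hence elementary), infinite and without algebraicity. For the irreflexive structures \ref{item:application-vi}--\ref{item:application-xi}: $\langle\Q,<\rangle$, the random strict partial order, the random tournament, the irreflexive random (di-)graph and the irreflexive $n$-clique-free graph are all themselves model-complete cores (again homogeneity forces every endomorphism to be an embedding, as no identification is possible without creating a loop or reversing/creating a forbidden edge), and none has algebraicity; the irreflexive equivalence relation with $k$ classes of countable size has as model-complete core the disjoint union of $k$ (finite or $\omega$ many) copies of the irreflexive complete graph on $\omega$ vertices, which for every $k$ is an infinite model-complete core without algebraicity.

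The main obstacle is purely in this last step: one must argue carefully that each claimed structure really is the model-complete core, i.e.\ verify both that it is homomorphically equivalent to $\A$ (exhibiting retractions, e.g.\ collapsing each equivalence class to a point, or collapsing the whole reflexive graph to a loop) and that it is itself a model-complete core (for the infinite homogeneous ones, by checking $\End = \overline{\Aut}^{\TT_{pw}}$ via homogeneity and the absence of any non-embedding endomorphisms; for the finite ones, by checking $\End = \Aut$). Uniqueness of the model-complete core (Theorem~\ref{thm:existence-cores}) then guarantees we have found the right object. I expect no serious difficulty beyond organising these standard combinatorial verifications structure by structure.
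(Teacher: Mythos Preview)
Your overall strategy---verify the hypotheses of Theorem~\ref{thm:suff-cond-zariski-equ-pw} case by case---is exactly the paper's approach, and your treatment of $\omega$-categoricity, transitivity, and no algebraicity is fine. However, you misidentify the model-complete core in several cases, and these are genuine errors rather than organisational issues.

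For \ref{item:application-i}, $\langle\Q,\leq\rangle$ is \emph{not} its own model-complete core: every constant map is an endomorphism (since $c\leq c$), so $\End(\A)\neq\overline{\Aut(\A)}$. The core is a single looped point. The same applies to the reflexive equivalence relation in \ref{item:application-iii}: constant maps are endomorphisms regardless of the number of classes, so the core is one looped point, not $k$ or $\omega$ many. Your proposed structure ``$\omega$ looped points'' is not a model-complete core either, since every self-map preserves the loop relation. For \ref{item:application-vii} and \ref{item:application-x}, your claim that ``no identification is possible'' fails: in the random strict partial order one can collapse incomparable elements, and in the random (di\nobreakdash-)graph without loops one can collapse non-adjacent vertices. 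Neither structure is its own core; the paper identifies the cores as $\langle\Q,<\rangle$ and the loopless $K_{\omega}$, respectively. Similarly, for \ref{item:application-ix} one can merge distinct classes injectively into a single class, so the core is a single copy of $K_{\omega}$, not $k$ disjoint copies (and your proposed structure again admits non-embedding endomorphisms).

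In each of these cases your claimed core fails the defining property $\End(\C)=\overline{\Aut(\C)}$, so the verification you outline in your final paragraph would not go through. Fortunately, once the cores are correctly identified as above, they are all either finite (a single looped point for \ref{item:application-i}--\ref{item:application-v}) or infinite without algebraicity ($\langle\Q,<\rangle$ or $K_{\omega}$ for the relevant irreflexive cases), and the rest of your argument applies unchanged.
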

\begin{proof}
  It is immediate that all structures
  in~\ref{item:application-i}-\ref{item:application-xi} are
  $\omega$-categorical structures without algebraicity which are
  transitive (in particular, they have a mobile
  core). For~\ref{item:application-i}-\ref{item:application-v}, the
  model-complete core of $\A$ is merely a single point with a loop; in
  particular, the model-complete core is
  finite. For~\ref{item:application-vi} and~\ref{item:application-xi},
  the structure $\A$ is already a model-complete core, so the
  model-complete core of $\A$ is just $\A$
  itself. For~\ref{item:application-vii}
  and~\ref{item:application-viii}, the model-complete core of $\A$ is
  the structure $\langle\Q,<\rangle$.  For~\ref{item:application-ix}
  and~\ref{item:application-x}, the model-complete core of $\A$ is the
  complete graph on countably infinitely vertices. Summarising, the
  model-complete core of $\A$ has no algebraicity
  in~\ref{item:application-vi}-\ref{item:application-xi}.

  In any case, Theorem~\ref{thm:suff-cond-zariski-equ-pw} applies and
  yields the desired conclusion.
\end{proof}

\subsection{Finite cores}
\label{sec:finite-cores}
First, we consider the case that $\A$ has a finite model-complete
core.
\begin{proposition}\label{prop:finite-cores-zariski-equ-pw}
  Let $\A$ be an $\omega$-categorical structure without algebraicity
  which has a mobile core. If the model-complete core of $\A$ is
  finite, then the Zariski topology on $\End(\A)$ coincides with the
  pointwise topology.
\end{proposition}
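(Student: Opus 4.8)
Since the Zariski topology is always coarser than any Hausdorff semigroup topology and $\TT_{pw}$ is a Hausdorff semigroup topology on $\End(\A)$, it suffices to show that every $\TT_{pw}$-basic open set is Zariski-open. By continuity of left and right translations in the Zariski topology, it is enough to show that for each $a \in A$ the ``preimage'' set $U_a := \set{s \in \End(\A)}{a \in \Img(s)}$ is Zariski-open (which we already have from Lemma~\ref{lem:image-zariski-open}) and, more substantially, that for each pair $a, b \in A$ the set $\set{s \in \End(\A)}{s(a) = b}$ is Zariski-open; a standard argument then reduces the full basis $\set{s}{s(\bar a) = \bar b}$ to finite intersections of such sets and of sets $M_{\varphi,\psi}$. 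So the crux is: \emph{the point-stabiliser-type sets $\set{s}{s(a)=b}$ are Zariski-open.}

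\textbf{Using the finite mobile core.} Fix $a, b \in A$. By the mobile core hypothesis there is a copy $\C$ of the (finite, size $n$) model-complete core sitting inside $\A$ as a substructure, together with $g \in \End(\A)$, such that $b \in g(A) \subseteq C$; and $\End(\C) = \Aut(\C)$ since $\C$ is finite. Here $C = \{c_1, \dots, c_n\}$ is a finite subset of $A$. The plan is to write the condition ``$s(a) = b$'' as a Boolean combination, using only finitely many elements, of (i) membership conditions ``$c \in \Img(t)$'' for various $c \in C$ and composites $t$ built from $s$ and fixed monoid elements, which are Zariski-open by Lemma~\ref{lem:image-zariski-open}, and (ii) finitely many conditions of the form $\varphi(s) = \psi(s)$ resp.\ $\varphi(s)\neq\psi(s)$, which are Zariski-closed resp.\ open by definition. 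The key maneuver: post-composing with $g$ lands everything inside the finite structure $\C$, where by Lemma~\ref{lem:finite-core-image-minimal} every endomorphism of $\A$ has image of size $\geq n$, so $g$ restricted to $C$ is a bijection of $C$ (in fact $g\!\restriction_C \in \Aut(\C)$ after identifying), and more generally for any $s \in \End(\A)$ the composite $gs\!\restriction_C$ is an automorphism of $\C$, hence a \emph{permutation} of the finite set $C$. That permutation is completely pinned down by the finitely many equations ``$gs(c_i) = c_j$'', each of which is equivalently expressed via the Zariski-(cl)open image conditions ``$c_j \in \Img(\text{const}_{c_i} \circ \text{(stuff)})$'' — more precisely, using that for a fixed element $x$, $s(x) = y$ holds iff $y \in \Img(s\, e_x)$ where $e_x$ is a fixed idempotent-like map collapsing everything to $x$ (such maps exist as endomorphisms since $C$ supports a loop-rich core, or one simply uses constant maps built from $g$) — this is exactly the device used in Lemma~\ref{lem:image-zariski-open}. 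Running over the finitely many possible permutations $\pi$ of $C$, the set $\{s : gs\!\restriction_C = \pi\}$ is Zariski-clopen, and on each such piece the equation $s(a) = b$ becomes equivalent to $gs(a) = g(b)$, i.e.\ to $\pi^{-1}$ applied appropriately, turning ``$s(a) = b$'' into the clopen condition ``$g(b) \in \Img(s\, e_a)$'' intersected with the correct permutation piece. Taking the finite union over permutations $\pi$ compatible with sending the $g$-image of $a$ to $g(b)$ exhibits $\set{s}{s(a) = b}$ as a finite union of Zariski-open sets.

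\textbf{Main obstacle.} The delicate point is the bookkeeping that converts ``$s(a) = b$'' faithfully into conditions about $gs$ on the finite core: a priori $g(b) = g(b')$ could happen for $b \neq b'$, so knowing $gs(a) = g(b)$ does not immediately give $s(a) = b$. Resolving this is where mobility is used in full strength: one chooses the copy $\C$ and the map $g$ so that $b$ itself lies in $C$ and $g\!\restriction_C$ is injective (bijective, by the size count from Lemma~\ref{lem:finite-core-image-minimal}); then on the clopen piece where $gs\!\restriction_C$ is a fixed permutation $\pi$, the value $s(a)$ — which lies in $C$ whenever we further postcompose to force it there, or which we access through $g$ — is determined, and $s(a) = b \iff gs(a) = g(b) \iff \pi$-related, genuinely. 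Making this chain of equivalences airtight, and in particular arranging that all the auxiliary ``collapse'' endomorphisms $e_x$ and the core-witnessing $g$ can be chosen uniformly enough that only finitely many fixed monoid elements appear (so that we stay within finite intersections/unions of $M_{\varphi,\psi}$'s and Lemma~\ref{lem:image-zariski-open} sets), is the real content; the rest is the routine reduction from point-conditions to tuple-conditions and the general ``Zariski $\subseteq$ any Hausdorff semigroup topology'' inclusion already recorded in the preliminaries.
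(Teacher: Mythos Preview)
Your strategy has two genuine gaps, and both are avoided by the paper's argument rather than by tighter bookkeeping.

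\textbf{The collapse maps do not exist.} You propose to detect ``$gs(c_i)=c_j$'' via ``$c_j\in\Img(gs\,e_{c_i})$'' with $e_{c_i}$ an endomorphism collapsing $A$ to $c_i$. But an endomorphism with singleton image exists only when the core is a single reflexive point; for any finite core containing an irreflexive edge (e.g.\ a single edge, as for the random tournament) there is no such $e_x$. Your remark that ``this is exactly the device used in Lemma~\ref{lem:image-zariski-open}'' is also not right: that lemma detects $a\in\Img(s)$ via $fs\neq gs$ for two elementary self-embeddings $f,g$ from Lemma~\ref{lem:emb-almost-ident} that differ only at $a$, not via collapse maps. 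Without the $e_x$, your plan to exhibit the pieces $\{s:gs\!\restriction_C=\pi\}$ as Zariski-clopen is circular: the defining equations $gs(c_i)=c_j$ are exactly the pointwise conditions whose Zariski-openness is at issue.

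\textbf{The equivalence $s(a)=b\Leftrightarrow gs(a)=g(b)$ fails.} You arranged $b\in C$ and observed that $g\!\restriction_C$ is a bijection of $C$, but $g$ is not injective on $A$, and $s(a)$ has no reason to lie in $C$. So $gs(a)=g(b)$ does not imply $s(a)=b$, and knowing the permutation $gs\!\restriction_C$ tells you nothing about $gs(a)$ since $a\notin C$ in general.

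The paper's proof makes the opposite choices and thereby avoids both problems. Mobility is applied to $a$ (not $b$), giving $g$ with $a\in g(A)=C$, and one works with $sg$ (not $gs$). Fix a reference $s_0$ with $s_0(a)=b$ and write $C=\{a_1,\dots,a_n\}$ with $a_1=a$. Since $\abs{\Img(sg)}=n$ for every $s$ (Lemma~\ref{lem:finite-core-image-minimal}), the set
\[
U=\set{s}{\Img(sg)=\{s_0(a_1),\dots,s_0(a_n)\}}=\bigcap_{j=1}^{n}\set{s}{s_0(a_j)\in\Img(sg)}
\]
is Zariski-open, being a finite intersection of sets of the form in Lemma~\ref{lem:image-zariski-open} pulled back along the translation $\rho_g$. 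On $U$ there are at most $n!$ possible values of $sg$ (it is determined by the tuple $(s(a_1),\dots,s(a_n))$), and since $a\in\Img(g)$ the value $s(a)$ is read off from $sg$. One then excises the finitely many bad values $t=sg$ with $s(a)\neq b$ via the Zariski-open conditions $sg\neq t$, obtaining a Zariski-open neighbourhood of $s_0$ inside $\set{s}{s(a)=b}$. No constant maps and no injectivity of $g$ off $C$ are needed.
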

\begin{proof}
  We show that the $\TT_{pw}$-generating sets
  $\set{s\in\End(\A)}{s(a)=b}$, $a,b\in A$, are
  $\TT_{\text{Zariski}}$-open by proving that they are
  $\TT_{\text{Zariski}}$-neighbourhoods of each element.
  
  Let $s_{0}\in\End(\A)$ such that $s_{0}(a)=b$. Since $\A$ has a
  mobile core, there exist a copy $\C$ of the model-complete core of
  $\A$ and $g\in\End(\A)$ such that $a\in g(A)\subseteq C$. By
  Lemma~\ref{lem:finite-core-image-minimal}, we know that $g(A)=C$. We
  set $n=\abs{C}$ and write $g(\A)=\{a_{1},\dots,a_{n}\}$ where
  $a_{1}=a$. Applying Lemma~\ref{lem:image-zariski-open}, we obtain
  that the set
  \begin{displaymath}
    V:=\set{s\in\End(\A)}{s_{0}(a_{1}),\dots,s_{0}(a_{n})\in\Img(s)}=\bigcap_{j=1}^{n}\set{s\in\End(\A)}{s_{0}(a_{i})\in\Img(s)}
  \end{displaymath}
  is open in the Zariski topology. Since the translation
  $\rho_{g}\colon s\mapsto sg$ on $\End(\A)$ is continuous with
  respect to the Zariski topology, the preimage
  \begin{displaymath}
    U:=\rho_{g}^{-1}(V)=\set{s\in\End(\A)}{s_{0}(a_{1}),\dots,s_{0}(a_{n})\in \Img(sg)}
  \end{displaymath}
  is $\TT_{\text{Zariski}}$-open as well. Again by
  Lemma~\ref{lem:finite-core-image-minimal}, the images of the
  endomorphisms $s_{0}g$ and $sg$ (for arbitrary $s\in\End(\A)$) must
  both have $n$ elements. Hence, the images $s_{0}(a_{i})$ are
  pairwise different and, further,
  \begin{displaymath}
    U=\set{s\in\End(\A)}{\Img(sg)=\{s_{0}(a_{1}),\dots,s_{0}(a_{n})\}}.
  \end{displaymath}
  The crucial observation is that $Ug=\set{sg}{s\in U}$ is a finite
  set: Any element $sg$ is determined by the ordered tuple
  $(s(a_{1}),\dots,s(a_{n}))$. Since the unordered set
  $\{s(a_{1}),\dots,s(a_{n})\}$ is fixed for $s\in U$, there are only
  finitely many (at most $n!$, to be precise) possibilities for the
  ordered tuple.

  Consequently, the set $M:=\set{sg}{s\in U, s(a)\neq b}$ is finite as
  well. We define
  \begin{displaymath}
    O:=U\cap\bigcap_{t\in M}\set{s\in\End(\A)}{sg\neq t}\in\TT_{\text{Zariski}}
  \end{displaymath}
  and claim that $O=\set{s\in\End(\A)}{s\in U, s(a)=b}$, subsequently
  giving $s_{0}\in O\subseteq\set{s\in\End(\A)}{s(a)=b}$ as
  desired. If $s\in U$ with $s(a)=b$, then we take $z\in A$ with
  $g(z)=a$ and note $sg(z)=s(a)=b\neq t(z)$ for all $t\in
  M$. Conversely, if $s\in U$ but $s(a)\neq b$, then $t:=sg\in M$, so
  $s\notin O$ -- completing the proof.
\end{proof}
\subsection{Cores without algebraicity}
\label{sec:cores-with-algebr}
Now we consider structures $\A$ whose model-complete cores do not have
algebraicity. In our proof, we will use the following technical
condition from~\cite{EJMMMP-zariski}:
\begin{lemma}[{\cite[Lemma~5.3]{EJMMMP-zariski}}]\label{lem:technical-EJMMMP}
  Let $X$ be an infinite set and let $S$ be a subsemigroup of $X^{X}$
  such that for every $a\in X$ there exist
  $\alpha,\beta,\gamma_{1},\dots,\gamma_{n}\in S$ for some $n\in\N$
  such that the following hold:
  \begin{enumerate}[label=(\roman*)]
  \item\label{item:technical-EJMMMP-i}
    $\alpha\vert_{X\setminus\{a\}}=\beta\vert_{X\setminus\{a\}}$ and
    $\alpha(a)\neq\beta(a)$
  \item\label{item:technical-EJMMMP-ii} $a\in\Img(\gamma_{i})$ for all
    $i\in\{1,\dots,n\}$;
  \item\label{item:technical-EJMMMP-iii} for every $s\in S$ and every
    $x\in X\setminus\{s(a)\}$, there is $i\in\{1,\dots,n\}$ so that
    $\Img(\gamma_{i})\cap s^{-1}(x)=\emptyset$.
  \end{enumerate}
  Then the Zariski topology of $S$ is the pointwise topology.
\end{lemma}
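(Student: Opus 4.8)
The plan is to prove the only nontrivial inclusion, namely that every $\TT_{pw}$-open subset of $S$ is $\TT_{\text{Zariski}}$-open; the reverse inclusion is automatic, since $\TT_{pw}$ (restricted from $X^{X}$) is a Hausdorff semigroup topology on $S$ and the Zariski topology is coarser than any such. A $\TT_{pw}$-basic open subset of $S$ has the form $\{s\in S:s(\bar a)=\bar b\}$ and is a finite intersection of sets $\{s\in S:s(a)=b\}$ with $a,b\in X$. Since finite intersections of $\TT_{\text{Zariski}}$-open sets are again $\TT_{\text{Zariski}}$-open, it therefore suffices to show that each single set $\{s\in S:s(a)=b\}$ is $\TT_{\text{Zariski}}$-open, and I will do this by rewriting it as a finite intersection of sets that are manifestly $\TT_{\text{Zariski}}$-open.

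So fix $a,b\in X$. First I would apply the hypothesis to the element $a$ and retain from it the maps $\gamma_{1},\dots,\gamma_{n}\in S$ provided by \ref{item:technical-EJMMMP-ii} and \ref{item:technical-EJMMMP-iii}. The central step is then the identity
\begin{displaymath}
  \{s\in S:s(a)=b\}=\bigcap_{i=1}^{n}\{s\in S:b\in\Img(s\gamma_{i})\}.
\end{displaymath}
For ``$\subseteq$'' I would use \ref{item:technical-EJMMMP-ii}: if $s(a)=b$, then choosing $z$ with $\gamma_{i}(z)=a$ gives $(s\gamma_{i})(z)=s(a)=b$, so $b\in\Img(s\gamma_{i})$ for every $i$. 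For ``$\supseteq$'' I would use \ref{item:technical-EJMMMP-iii} contrapositively: if $s(a)\neq b$, then $b\in X\setminus\{s(a)\}$, so some $\gamma_{i}$ satisfies $\Img(\gamma_{i})\cap s^{-1}(b)=\emptyset$, i.e.\ $b\notin s(\Img(\gamma_{i}))=\Img(s\gamma_{i})$, and hence $s$ fails to lie in the intersection on the right.

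It remains to show that each $\{s\in S:b\in\Img(s\gamma_{i})\}$ is $\TT_{\text{Zariski}}$-open. Since $\{s\in S:b\in\Img(s\gamma_{i})\}=\rho_{\gamma_{i}}^{-1}(\{s\in S:b\in\Img(s)\})$ and the right translation $\rho_{\gamma_{i}}$ is $\TT_{\text{Zariski}}$-continuous, I would reduce to showing that $\{s\in S:b\in\Img(s)\}$ is $\TT_{\text{Zariski}}$-open for every $b\in X$. This is exactly where condition \ref{item:technical-EJMMMP-i} enters, now applied to the element $b$: taking $\alpha',\beta'\in S$ that agree on $X\setminus\{b\}$ but satisfy $\alpha'(b)\neq\beta'(b)$, one checks that $b\in\Img(s)$ if and only if $\alpha's\neq\beta's$, because $\alpha's=\beta's$ says precisely that $s(x)\neq b$ for all $x\in X$. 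Hence $\{s\in S:b\in\Img(s)\}$ is $\TT_{\text{Zariski}}$-open; this is exactly the argument recorded in Lemma~\ref{lem:image-zariski-open}, with \ref{item:technical-EJMMMP-i} playing the role that Lemma~\ref{lem:emb-almost-ident} plays there. Assembling these observations then finishes the proof.

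I expect the genuine difficulty to lie in the central identity of the second paragraph. Conditions \ref{item:technical-EJMMMP-ii} and \ref{item:technical-EJMMMP-iii} are opaque in isolation, and the real insight is to recognise that they have been reverse-engineered precisely so that the external, evaluation-based condition ``$s(a)=b$'' becomes equivalent to the conjunction of the internal image conditions ``$b\in\Img(s\gamma_{i})$''. Once that identity is in place, the remaining ingredients -- continuity of the translations $\rho_{\gamma_{i}}$ for $\TT_{\text{Zariski}}$ and the $\TT_{\text{Zariski}}$-openness of the image sets $\{s\in S:b\in\Img(s)\}$, which is powered by \ref{item:technical-EJMMMP-i} and is the content of Lemma~\ref{lem:image-zariski-open} -- are routine.
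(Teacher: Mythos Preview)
The paper does not give its own proof of this lemma; it is quoted from \cite{EJMMMP-zariski} and followed only by the remark that condition~\ref{item:technical-EJMMMP-i} plays the role of Lemma~\ref{lem:emb-almost-ident} and that the proof proceeds by constructing the $\TT_{pw}$-generating sets from the image sets $\{s\in S:a\in\Img(s)\}$ of Lemma~\ref{lem:image-zariski-open}. Your argument is correct and implements precisely this sketch: you apply~\ref{item:technical-EJMMMP-i} at $b$ to obtain the Zariski-openness of $\{s:b\in\Img(s)\}$, pull back along $\rho_{\gamma_i}$, and use~\ref{item:technical-EJMMMP-ii}--\ref{item:technical-EJMMMP-iii} to identify $\{s:s(a)=b\}$ with $\bigcap_i\{s:b\in\Img(s\gamma_i)\}$.
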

We remark that~\ref{item:technical-EJMMMP-i} corresponds to
Lemma~\ref{lem:emb-almost-ident} and that the proof proceeds by
constructing the generating sets of the pointwise topology from the
sets $\set{s\in S}{a\in\Img(s)}$ exhibited in
Lemma~\ref{lem:image-zariski-open}.

The fact that the model-complete core does not have algebraicity will come into play
via the following observation:
\begin{lemma}\label{lem:no-alg-two-copies}
  Let $\B$ be a countably infinite structure without algebraicity and
  let $b\in\B$. Then there exist $f,h$ in the $\TT_{pw}$-closure of
  $\Aut(\B)$ such that $f(b)=b=h(b)$ and $f(B)\cap h(B)=\{b\}$ (so
  there exist two copies of $\B$ within $\B$ which only have $b$ in
  common).
\end{lemma}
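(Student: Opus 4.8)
The plan is to obtain $f$ and $h$ as $\TT_{pw}$-limits of automorphisms of $\B$ via a back-and-forth construction that in fact only needs to go ``back'' — we shall prescribe domains, not ranges. Recall that a total map $g\colon B\to B$ lies in the $\TT_{pw}$-closure of $\Aut(\B)$ if and only if every restriction of $g$ to a finite subset of $B$ agrees with the restriction of some automorphism of $\B$; consequently, the union of an increasing chain of finite partial maps, each extending to an automorphism of $\B$ and with domains exhausting $B$, lies in this closure. So it will suffice to build two such increasing chains $(f_n)_{n\in\N}$ and $(h_n)_{n\in\N}$ with $\bigcup_n\Dom(f_n)=\bigcup_n\Dom(h_n)=B$, starting from $f_0=h_0=\{(b,b)\}$ (which extends to $\id_B$), while maintaining the invariant $\Img(f_n)\cap\Img(h_n)=\{b\}$ throughout.

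The key step — and the only place where the hypothesis of no algebraicity will be used — is a one-point extension lemma: if $p$ is a finite partial map extending to an automorphism of $\B$ and $c\in B\setminus\Dom(p)$, then the set $T_{p,c}$ of those $t\in B$ for which $p\cup\{(c,t)\}$ again extends to an automorphism of $\B$ is infinite and disjoint from $\Img(p)$. To prove this I would fix $\alpha\in\Aut(\B)$ extending $p$ and set $E:=\Img(p)=\alpha(\Dom(p))$, a finite set; as $c\notin\Dom(p)$ and $\alpha$ is injective, $\alpha(c)\in B\setminus E$. For each $\gamma\in\Stab(E)$ the automorphism $\gamma\alpha$ still extends $p$ (since $\gamma$ fixes $E$ pointwise) and sends $c$ to $\gamma(\alpha(c))$, so $T_{p,c}\supseteq\Orb(\alpha(c);E)$, which is infinite because $\B$ has no algebraicity; and any $t\in T_{p,c}$ is the image of $c$ under an automorphism extending $p$, hence cannot lie in $E=\Img(p)$.

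Granting this, the construction is routine. Enumerate $B=\{b_0,b_1,b_2,\dots\}$ with $b_0=b$. Given $f_n$ and $h_n$, I would first extend $f_n$: if $b_n\in\Dom(f_n)$, put $f_{n+1}:=f_n$; otherwise $T_{f_n,b_n}$ is infinite while $\Img(h_n)$ is finite, so one may choose $t\in T_{f_n,b_n}\setminus\Img(h_n)$ and set $f_{n+1}:=f_n\cup\{(b_n,t)\}$. Since $t\notin\Img(f_n)$ while $b\in\Img(f_n)$ (as $f_0\subseteq f_n$), we have $t\neq b$, whence $\Img(f_{n+1})\cap\Img(h_n)=(\Img(f_n)\cap\Img(h_n))\cup(\{t\}\cap\Img(h_n))=\{b\}$. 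Then extend $h_n$ in the same fashion, now selecting the new image point in $B\setminus\Img(f_{n+1})$, to obtain $h_{n+1}$ with $b_n\in\Dom(h_{n+1})$ and $\Img(f_{n+1})\cap\Img(h_{n+1})=\{b\}$. The invariant persists, and $b_n$ has entered both domains. Finally set $f:=\bigcup_n f_n$ and $h:=\bigcup_n h_n$: both are total and lie in the $\TT_{pw}$-closure of $\Aut(\B)$ by the opening remark, they satisfy $f(b)=h(b)=b$, and any $x\in\Img(f)\cap\Img(h)$ lies in $\Img(f_k)\cap\Img(h_k)=\{b\}$ for $k$ large enough, so $\Img(f)\cap\Img(h)=\{b\}$. (Elements of the $\TT_{pw}$-closure of $\Aut(\B)$ are self-embeddings, so $f(\B)$ and $h(\B)$ are the two copies of $\B$ from the statement.)

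The only mildly delicate point — and thus where care is needed — is the interleaving of the two chains. One cannot build $f$ to completion first and only then build $h$, since $\Img(f)$ would then be far too large for $\Img(h)$ to meet it only in $b$. The construction above works precisely because the valid one-point extensions for $f$ always form an infinite set which, by the extension lemma, is automatically disjoint from $\Img(f_n)$ (and symmetrically for $h$): at each finite stage only finitely many points of $B$ have been committed, so there is always room to steer the two images apart while still covering all of $B$.
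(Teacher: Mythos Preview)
Your proof is correct and follows essentially the same approach as the paper: both enumerate $B$, start with the identity on $\{b\}$, and at each stage use the infinitude of relative orbits (no algebraicity) to choose the next image points for the two maps so as to keep the images disjoint off $\{b\}$, then pass to the $\TT_{pw}$-limit. The only cosmetic difference is that you isolate the one-point extension step as a separate lemma and phrase the construction in terms of finite partial maps extending to automorphisms, whereas the paper works directly with a sequence of automorphisms agreeing on growing finite sets; these are equivalent formulations.
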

\begin{proof}
  We enumerate $B=\set{b_{n}}{n\in\N}$ where $b_{1}=b$. First, we
  recursively construct automorphisms
  $\alpha_{n},\beta_{n}\in\Aut(\B)$, $n\in\N$, such that
  $\alpha_{n+1}\vert_{\{b_{1},\dots,b_{n}\}}=\alpha_{n}\vert_{\{b_{1},\dots,b_{n}\}}$,
  $\beta_{n+1}\vert_{\{b_{1},\dots,b_{n}\}}=\beta_{n}\vert_{\{b_{1},\dots,b_{n}\}}$
  and
  $\alpha_{n}(\{b_{1},\dots,b_{n}\})\cap\beta_{n}(\{b_{1},\dots,b_{n}\})=\{b\}$
  for all $n\in\N$. We start by setting
  $\alpha_{1}=\beta_{1}:=\id_{B}$. If $\alpha_{n}$ and $\beta_{n}$ are
  already defined, we put $Y:=\alpha_{n}(\{b_{1},\dots,b_{n}\})$ as
  well as $Z:=\beta_{n}(\{b_{1},\dots,b_{n}\})$. Since $\B$ has no
  algebraicity, the relative orbits $\Orb(\alpha_{n}(b_{n+1});Y)$ and
  $\Orb(\beta_{n}(b_{n+1});Z)$ are infinite, so we can find
  $c_{n+1}\in\Orb(\alpha_{n}(b_{n+1});Y)$ which is not contained in
  $Z$ and then find $d_{n+1}\in\Orb(\beta_{n}(b_{n+1});Z)$ which is
  not contained in $Y\cup\{c_{n+1}\}$. Taking $\gamma\in\Stab(Y)$ with
  $\gamma(\alpha_{n}(b_{n+1}))=c_{n+1}$ as well as $\delta\in\Stab(Z)$
  with $\delta(\beta_{n}(b_{n+1}))=d_{n+1}$, and setting
  $\alpha_{n+1}:=\gamma\alpha_{n}$ as well as
  $\beta_{n+1}:=\delta\beta_{n}$ completes the construction. Finally,
  we set $f:=\lim_{n\in\N}\alpha_{n}$ and
  $h:=\lim_{n\in\N}\beta_{n}$; these maps are contained in the
  $\TT_{pw}$-closure of $\Aut(\B)$ and have the desired properties.
\end{proof}

\begin{proposition}\label{prop:core-no-alg-zariski-equals-pw}
  Let $\A$ be an $\omega$-categorical structure without algebraicity
  which has a mobile core. If the model-complete core of $\A$ is
  infinite and does not have algebraicity, then the Zariski topology
  on $\End(\A)$ coincides with the pointwise topology.
\end{proposition}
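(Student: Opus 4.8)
The plan is to verify the three hypotheses of Lemma~\ref{lem:technical-EJMMMP} for the subsemigroup $S=\End(\A)$ of $A^{A}$; the conclusion of that lemma is then exactly what we want. Fix $a\in A$. Hypothesis~\ref{item:technical-EJMMMP-i} is precisely Lemma~\ref{lem:emb-almost-ident}, since the maps it produces lie in the $\TT_{pw}$-closure of $\Aut(\A)$ and hence in $\End(\A)$. So the whole content is the construction of the endomorphisms $\gamma_{1},\dots,\gamma_{n}$ witnessing~\ref{item:technical-EJMMMP-ii} and~\ref{item:technical-EJMMMP-iii}, and I claim $n=2$ suffices.

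To build them, I would use that $\A$ has a mobile core to fix a copy $\C$ of the model-complete core of $\A$ --- a substructure of $\A$ --- together with $g\in\End(\A)$ such that $a\in g(A)\subseteq C$. As $\C$ is infinite it is $\omega$-categorical, hence countably infinite, and by hypothesis it has no algebraicity; thus Lemma~\ref{lem:no-alg-two-copies} applied to $\B=\C$ and the point $a$ gives $f,h$ in the $\TT_{pw}$-closure of $\Aut(\C)$ with $f(a)=a=h(a)$ and $f(C)\cap h(C)=\{a\}$. Set $\gamma_{1}:=fg$ and $\gamma_{2}:=hg$; since $g(A)\subseteq C$ these compositions make sense and, being composites of homomorphisms, they belong to $\End(\A)$. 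Writing $a=g(z)$ we get $\gamma_{1}(z)=f(a)=a=h(a)=\gamma_{2}(z)$, so $a\in\Img(\gamma_{1})\cap\Img(\gamma_{2})$, which is~\ref{item:technical-EJMMMP-ii}; moreover $\Img(\gamma_{1})\subseteq\Img(f)\subseteq C$ and $\Img(\gamma_{2})\subseteq\Img(h)\subseteq C$, so $\Img(\gamma_{1})\cap\Img(\gamma_{2})\subseteq f(C)\cap h(C)=\{a\}$ and therefore $\Img(\gamma_{1})\cap\Img(\gamma_{2})=\{a\}$.

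The key point is that~\ref{item:technical-EJMMMP-iii} then drops out of Lemma~\ref{lem:core-hom-emb}. Let $s\in\End(\A)$ and $x\in A\setminus\{s(a)\}$, and suppose towards a contradiction that both $\Img(\gamma_{1})\cap s^{-1}(x)$ and $\Img(\gamma_{2})\cap s^{-1}(x)$ are nonempty; pick $y_{i}\in\Img(\gamma_{i})$ with $s(y_{i})=x$. Then $y_{1},y_{2}\in C$, and $s\vert_{\C}$ is a homomorphism from the model-complete core $\C$ into $\A$, hence an embedding by Lemma~\ref{lem:core-hom-emb}, in particular injective; from $s(y_{1})=x=s(y_{2})$ we conclude $y_{1}=y_{2}\in\Img(\gamma_{1})\cap\Img(\gamma_{2})=\{a\}$, whence $x=s(a)$, a contradiction. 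Thus all hypotheses of Lemma~\ref{lem:technical-EJMMMP} are met and $\TT_{\text{Zariski}}=\TT_{pw}$ on $\End(\A)$.

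The one place that looks delicate is hypothesis~\ref{item:technical-EJMMMP-iii}. The naive attempt --- making the $\Img(\gamma_{i})$ pairwise meet only in $\{a\}$ and hoping no $s$ can collapse many of them together --- fails outright: already the random graph without loops has as core the countable complete graph without loops, and its endomorphisms can have infinite fibres, so no fixed $n$ bounds the collapsing. The resolution, and the reason it is the core rather than $\A$ itself that must be well behaved, is that every endomorphism of $\A$ is automatically injective on any copy of the model-complete core (Lemma~\ref{lem:core-hom-emb}); hence arranging $\Img(\gamma_{1})\cap\Img(\gamma_{2})=\{a\}$ \emph{inside} such a copy is already enough, and $n=2$ works uniformly.
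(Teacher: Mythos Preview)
Your proof is correct and follows essentially the same approach as the paper: verify the hypotheses of Lemma~\ref{lem:technical-EJMMMP} with $n=2$, using the mobile core to place $a$ inside a copy $\C$ of the model-complete core via $g$, Lemma~\ref{lem:no-alg-two-copies} to obtain $f,h$ with $f(C)\cap h(C)=\{a\}$, and Lemma~\ref{lem:core-hom-emb} to force injectivity of $s\vert_{C}$. Your explicit isolation of the equality $\Img(\gamma_{1})\cap\Img(\gamma_{2})=\{a\}$ and the closing discussion of why $n=2$ suffices uniformly are nice expository additions, but the argument is the same.
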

\begin{proof}
  We check the assumptions of Lemma~\ref{lem:technical-EJMMMP}.

  Since $\A$ is $\omega$-categorical without algebraicity,
  property~\ref{item:technical-EJMMMP-i} follows from
  Lemma~\ref{lem:emb-almost-ident}.

  For properties~\ref{item:technical-EJMMMP-ii}
  and~\ref{item:technical-EJMMMP-iii}, we fix $a\in A$, set $n=2$ and
  construct $\gamma_{1},\gamma_{2}$. Since $\A$ has a mobile core,
  there exist a copy $\C$ of the model-complete core of $\A$ and
  $g\in\End(\A)$ such that $a\in g(A)\subseteq C$. Since $\C$ has no
  algebraicity, there exist $f,h$ in the $\TT_{pw}$-closure of
  $\Aut(\C)$ such that $f(a)=a=h(a)$ and $f(C)\cap h(C)=\{a\}$ by
  Lemma~\ref{lem:no-alg-two-copies}. Using the homomorphism
  $g\colon\A\to\C$, we set $\gamma_{1}:=fg$ and $\gamma_{2}:=hg$,
  considered as endomorphisms of $\A$. Then $a\in\Img(\gamma_{i})$,
  i.e.~\ref{item:technical-EJMMMP-ii} holds. Suppose now that for some
  $s\in\End(\A)$ and $x\in A$, we have
  $\Img(\gamma_{i})\cap s^{-1}\{x\}\neq\emptyset$ for $i=1,2$. In
  order to prove \ref{item:technical-EJMMMP-iii}, the goal is to show
  $x=s(a)$. We rewrite to obtain the existence of $x_{i}\in A$ with
  $sfg(x_{1})=s\gamma_{1}(x_{1})=x=s\gamma_{2}(x_{2})=shg(x_{2})$. As
  a homomorphism from $\C$ to $\A$, the restriction
  $s\vert_{C}:\C\to\A$ is an embedding by
  Lemma~\ref{lem:core-hom-emb}, in particular injective. Hence,
  \begin{displaymath}
    fg(x_{1})=hg(x_{2})\in f(C)\cap h(C)=\{a\},
  \end{displaymath}
  yielding $x=sfg(x_{1})=s(a)$ as desired.
\end{proof}

\section{Counterexample}
\label{sec:counterexample}
In this section, we give an example of an $\omega$-categorical (even
homogeneous in a finite language) and transitive structure without
algebraicity such that the Zariski topology on its endomorphism monoid
does not coincide with the pointwise topology, thus answering
Question~\ref{q:ejmmp}. By our results in
Section~\ref{sec:two-sets-sufficient}, the model-complete core of this
structure must be infinite and have algebraicity. Informally speaking,
we take a complete graph on countably many vertices where each point
has as fine structure a complete \emph{bipartite} graph on countably
many vertices, see Figure~\ref{fig:structure-G} below.

\subsection{Definitions, notation and preliminary properties}
\label{sec:defin-notat-prel}

We start by formally introducing our structure and giving some
notation.
\begin{definition}\label{def:complete-bipartite}
  \hspace{0mm}
  \begin{enumerate}[label=(\roman*)]
  \item Let $\K_{2,\omega}$ denote the \emph{complete bipartite graph
      on countably many vertices} (without loops): We write the domain
    as $K_{2,\omega}:=A_{+1}\cupdot A_{-1}$ where $A_{+1}$ and
    $A_{-1}$ are countably infinite sets referred to as the
    \emph{parts} of $\K_{2,\omega}$, and define the edge relation as
    $E^{\K_{2,\omega}}:=A_{-1}\times A_{+1}\cup A_{+1}\times A_{-1}$,
    i.e. two points are connected if and only if they are contained in
    different parts of $\K_{2,\omega}$.
  \item Let $\G$ denote the following structure over the language of
    two binary relations: We set $G:=\N\times K_{2,\omega}$ (countably
    many copies of $K_{2,\omega}$) and define the relations as
    follows:
    \begin{align*}
      E_{1}^{\G}&:=\set{((i,x),(j,y))\in G^{2}}{i\neq j},\\
      E_{2}^{\G}&:=\set{((i,x),(j,y))\in G^{2}}{i=j\text{ and }(x,y)\in E^{\K_{2,\omega}}}.
    \end{align*}
    This means that the set of copies of $K_{2,\omega}$ forms a
    complete graph with respect to $E_{1}$ and that each copy
    $\{i\}\times K_{2,\omega}$ of $K_{2,\omega}$ is indeed a copy of
    the graph $\K_{2,\omega}$ (with respect to $E_{2}$); see
    Figure~\ref{fig:structure-G}.
  \end{enumerate}
\end{definition}
\begin{figure}[h]\label{fig:structure-G}
  \begin{center}
    \begin{tikzpicture}
      \coordinate (origin) at (0,0);
      \coordinate (offset1) at (5,0);
      \coordinate (offset2) at (2.5,4);
      \draw[dashed,ultra thick] (origin) -- ($(origin)+(offset1)$) --
      ($(origin)+(offset1)+(offset2)$) -- ($(origin)+(offset2)$) --
      (origin);
      \draw[dashed,ultra thick] (origin) --
      ($(origin)+(offset1)+(offset2)$);
      \draw[dashed,ultra thick] ($(origin)+(offset1)$) --
      ($(origin)+(offset2)$);
      \draw[dashed,ultra thick] (origin) -- (-2,1);
      \draw[dashed,ultra thick] (origin) -- (-2,-1);
      \draw[dashed,ultra thick] ($(origin)+(offset1)$) -- (7,-1);
      \draw[dashed,ultra thick] ($(origin)+(offset1)$) -- (3,-1);
      \draw[dashed,ultra thick] ($(origin)+(offset2)$) -- (0.5,5);
      \draw[dashed,ultra thick] ($(origin)+(offset2)$) -- (4.5,5);
      \draw[dashed,ultra thick] ($(origin)+(offset1)+(offset2)$) -- (9.5,3);
      \draw[dashed,ultra thick] ($(origin)+(offset1)+(offset2)$) -- (9.5,5);
      \begin{scope}[rotate=15]
        \coordinate (Aorigin) at (0,0);
        \coordinate (Aoffset1) at (5,0);
        \coordinate (Aoffset2) at (2.5,4);
        \filldraw[color=black,thick,fill=white] (Aorigin) ellipse (1 and
        1.5);
        \draw[dotted,thick] (-1,0) -- (1,0);
        \coordinate (AP1) at (-0.6,-0.9);
        \coordinate (AQ1) at (-0.5,0.9);
        \coordinate (AP2) at (-0.4,-1.2);
        \coordinate (AQ2) at (0,1.3);
        \coordinate (AP3) at (0.5,-0.7);
        \coordinate (AQ3) at (0.3,0.9);
        \filldraw (AP1) circle (1pt);
        \filldraw (AQ1) circle (1pt);
        \filldraw (AP2) circle (1pt);
        \filldraw (AQ2) circle (1pt);
        \filldraw (AP3) circle (1pt);
        \filldraw (AQ3) circle (1pt);
        \draw (AP1) -- (AQ1);
        \draw (AP1) -- (AQ2);
        \draw (AP1) -- (AQ3);
        \draw (AP2) -- (AQ1);
        \draw (AP2) -- (AQ2);
        \draw (AP2) -- (AQ3);
        \draw (AP3) -- (AQ1);
        \draw (AP3) -- (AQ2);
        \draw (AP3) -- (AQ3);
      \end{scope}      
      \begin{scope}[shift=(offset1),rotate=15]
        \coordinate (Borigin) at (0,0);
        \coordinate (Boffset1) at (5,0);
        \coordinate (Boffset2) at (2.5,4);
        \filldraw[color=black,thick,fill=white] (Borigin)
        ellipse (1 and 1.5);
        \draw[dotted,thick] (-1,0) -- (1,0);
        \coordinate (BP1) at (-0.6,-0.9);
        \coordinate (BQ1) at (-0.5,0.9);
        \coordinate (BP2) at (-0.4,-1.2);
        \coordinate (BQ2) at (0,1.3);
        \coordinate (BP3) at (0.5,-0.7);
        \coordinate (BQ3) at (0.3,0.9);
        \filldraw (BP1) circle (1pt);
        \filldraw (BQ1) circle (1pt);
        \filldraw (BP2) circle (1pt);
        \filldraw (BQ2) circle (1pt);
        \filldraw (BP3) circle (1pt);
        \filldraw (BQ3) circle (1pt);
        \draw (BP1) -- (BQ1);
        \draw (BP1) -- (BQ2);
        \draw (BP1) -- (BQ3);
        \draw (BP2) -- (BQ1);
        \draw (BP2) -- (BQ2);
        \draw (BP2) -- (BQ3);
        \draw (BP3) -- (BQ1);
        \draw (BP3) -- (BQ2);
        \draw (BP3) -- (BQ3);
      \end{scope}
      \begin{scope}[shift=(offset2),rotate=15]
        \coordinate (Corigin) at (0,0);
        \coordinate (Coffset1) at (5,0);
        \coordinate (Coffset2) at (2.5,4);
        \filldraw[color=black,thick,fill=white] (Corigin)
        ellipse (1 and 1.5);
        \draw[dotted,thick] (-1,0) -- (1,0);
        \coordinate (CP1) at (-0.6,-0.9);
        \coordinate (CQ1) at (-0.5,0.9);
        \coordinate (CP2) at (-0.4,-1.2);
        \coordinate (CQ2) at (0,1.3);
        \coordinate (CP3) at (0.5,-0.7);
        \coordinate (CQ3) at (0.3,0.9);
        \filldraw (CP1) circle (1pt);
        \filldraw (CQ1) circle (1pt);
        \filldraw (CP2) circle (1pt);
        \filldraw (CQ2) circle (1pt);
        \filldraw (CP3) circle (1pt);
        \filldraw (CQ3) circle (1pt);
        \draw (CP1) -- (CQ1);
        \draw (CP1) -- (CQ2);
        \draw (CP1) -- (CQ3);
        \draw (CP2) -- (CQ1);
        \draw (CP2) -- (CQ2);
        \draw (CP2) -- (CQ3);
        \draw (CP3) -- (CQ1);
        \draw (CP3) -- (CQ2);
        \draw (CP3) -- (CQ3);
      \end{scope}
      \begin{scope}[shift=($(offset1)+(offset2)$),rotate=15]
        \coordinate (Dorigin) at (0,0);
        \coordinate (Doffset1) at (5,0);
        \coordinate (Doffset2) at (2.5,4);
        \filldraw[color=black,thick,fill=white] (Dorigin)
        ellipse (1 and 1.5);
        \draw[dotted,thick] (-1,0) -- (1,0);
        \coordinate (DP1) at (-0.6,-0.9);
        \coordinate (DQ1) at (-0.5,0.9);
        \coordinate (DP2) at (-0.4,-1.2);
        \coordinate (DQ2) at (0,1.3);
        \coordinate (DP3) at (0.5,-0.7);
        \coordinate (DQ3) at (0.3,0.9);
        \filldraw (DP1) circle (1pt);
        \filldraw (DQ1) circle (1pt);
        \filldraw (DP2) circle (1pt);
        \filldraw (DQ2) circle (1pt);
        \filldraw (DP3) circle (1pt);
        \filldraw (DQ3) circle (1pt);
        \draw (DP1) -- (DQ1);
        \draw (DP1) -- (DQ2);
        \draw (DP1) -- (DQ3);
        \draw (DP2) -- (DQ1);
        \draw (DP2) -- (DQ2);
        \draw (DP2) -- (DQ3);
        \draw (DP3) -- (DQ1);
        \draw (DP3) -- (DQ2);
        \draw (DP3) -- (DQ3);
      \end{scope}
    \end{tikzpicture}
  \end{center}
  \caption{The structure $\G$: complete graph on countably many
    vertices (dashed) where each point has a complete bipartite graph
    on countably many vertices as fine structure (solid).}
\end{figure}
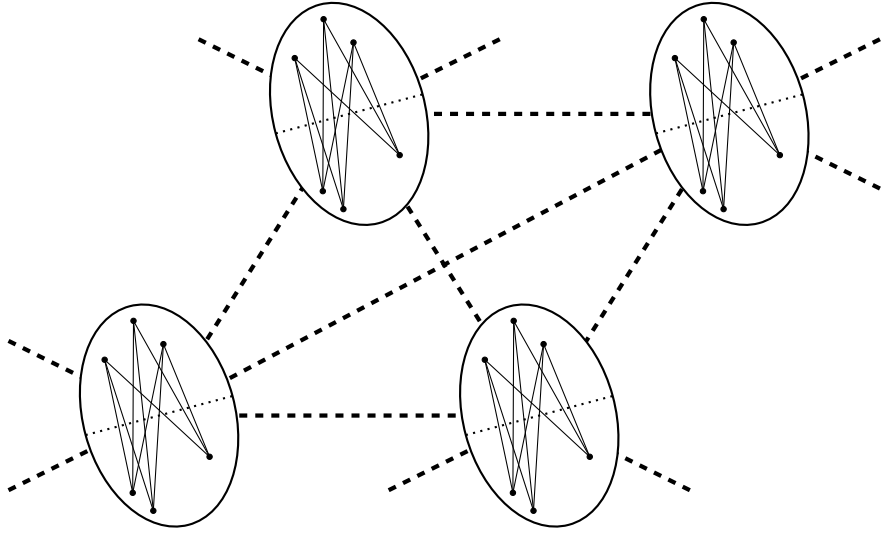
Note that an endomorphism $s$ of $\K_{2,\omega}$ acts as a permutation
on the set $\{A_{+1},A_{-1}\}$ of parts since two
($E_{2}^{\K_{2,\omega}}$-connected) elements from different parts of
$\K_{2,\omega}$ cannot be mapped to the same part of $\K_{2,\omega}$
-- we either have $s(A_{+1})\subseteq A_{+1}$ and
$s(A_{-1})\subseteq A_{-1}$ or $s(A_{+1})\subseteq A_{-1}$ and
$s(A_{-1})\subseteq A_{+1}$.
\begin{definition}\label{def:sign-end-bipartite}
  For $s\in\End(\K_{2,\omega})$, we put $\sgn(s)\in\{+1,-1\}$ to be
  the sign of the permutation induced by $s$ on
  $\{A_{+1},A_{-1}\}$. Explicitly, this means that
  $s(A_{e})\subseteq A_{e\cdot\sgn(s)}$ for $e=\pm 1$. As a slight
  abuse of notation, we will refer to $\sgn(s)$ as the \emph{sign} of
  $s$.
\end{definition}
Clearly, we have $\sgn(st)=\sgn(s)\sgn(t)$ for
$s,t\in\End(\K_{2,\omega})$. As a tool, we define two very simple
endomorphisms of $\K_{2,\omega}$.
\begin{notation}
  \hspace{0mm}
  \begin{enumerate}[label=(\roman*)]
  \item In the sequel, $a_{+1}\in A_{+1}$ and $a_{-1}\in A_{-1}$ shall
    denote fixed elements.
  \item We define $c_{+1}\in\End(\K_{2,\omega})$ and
    $c_{-1}\in\End(\K_{2,\omega})$ to be the unique endomorphisms of
    $\K_{2,\omega}$ with image $\{a_{+1},a_{-1}\}$ and sign $+1$ and
    $-1$, respectively. So $c_{+1}$ is constant on $A_{e}$ with value
    $a_{e}$ and $c_{-1}$ is constant on $A_{e}$ with value $a_{-e}$
    for $e=\pm 1$.
  \end{enumerate}
\end{notation}

In order to describe the automorphism group and endomorphism monoid of
$\G$, the following notation will be useful.
\begin{notation}\label{not:wreath-prod}
  Let $X$ be a set, let $\tau\colon\N\to\N$ and let
  $s_{i}\colon X\to X$ for each $i\in\N$. Then
  $\bigsqcup_{i\in\N}^{\tau} s_{i}$ shall denote the self-map of
  $\N\times X$ defined by
  \begin{displaymath}
    \bigsqcup_{i\in\N}\! ^{\tau} s_{i}:
    \begin{cases}
      \N\times X&\to\N\times X\\
      \hfill (i,x)&\mapsto (\tau(i),s_{i}(x))
    \end{cases}
  \end{displaymath}
  For $\tau\colon\N\to\N$ and $s\colon X\to X$, we further set
  $\tau\ltimes s:=\bigsqcup_{i\in\N}^{\tau}s$.
\end{notation}
\begin{lemma}\label{lem:end-G-aut-G}
  \hspace{0mm}
  \begin{enumerate}[label=(\roman*)]
  \item\label{item:end-G-aut-G-i}
    $\End(\G)=\set{\bigsqcup_{i\in\N}^{\tau}
      s_{i}}{\tau\in\Inj(\N),s_{i}\in\End(\K_{2,\omega})}$.
  \item\label{item:end-G-aut-G-ii}
    $\Aut(\G)=\set{\bigsqcup_{i\in\N}^{\sigma}
      \alpha_{i}}{\sigma\in\Sym(\N),\alpha_{i}\in\Aut(\K_{2,\omega})}$.
  \end{enumerate}
\end{lemma}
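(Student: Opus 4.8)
The plan is to prove both set equalities by double inclusion: the inclusions ``$\supseteq$'' are direct verifications, while the inclusions ``$\subseteq$'' rest on the observation that an endomorphism of $\G$ must respect the partition of $G$ into copies of $K_{2,\omega}$. For ``$\supseteq$'' in~\ref{item:end-G-aut-G-i}, I would check that any $s=\bigsqcup_{i\in\N}^{\tau} s_{i}$ with $\tau\in\Inj(\N)$ and $s_{i}\in\End(\K_{2,\omega})$ preserves both relations: an $E_{1}^{\G}$-edge joins points with distinct first coordinates, which stay distinct as $\tau$ is injective; an $E_{2}^{\G}$-edge lies within a single copy $\{i\}\times K_{2,\omega}$, is mapped into $\{\tau(i)\}\times K_{2,\omega}$, and is preserved because $s_{i}\in\End(\K_{2,\omega})$. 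For ``$\supseteq$'' in~\ref{item:end-G-aut-G-ii}, the same computation with $\tau\in\Sym(\N)$ and $\alpha_{i}\in\Aut(\K_{2,\omega})$ shows the map is a bijection that moreover \emph{reflects} $E_{1}^{\G}$ (injectivity of $\tau$) and $E_{2}^{\G}$ (each $\alpha_{i}$ reflects $E^{\K_{2,\omega}}$), hence is an automorphism.

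The substantial direction is ``$\subseteq$'' in~\ref{item:end-G-aut-G-i}. Given $s\in\End(\G)$, I claim that for each $i\in\N$ there is $\tau(i)\in\N$ with $s(\{i\}\times K_{2,\omega})\subseteq\{\tau(i)\}\times K_{2,\omega}$. Two elements of one copy lying in \emph{different} parts of $\K_{2,\omega}$ form an $E_{2}^{\G}$-edge, so their images share a first coordinate; two elements of one copy lying in the \emph{same} part of $\K_{2,\omega}$ have a common $E_{2}^{\G}$-neighbour inside that copy (any vertex of the opposite, nonempty, part), so by transitivity their images also share a first coordinate. This well-defines $\tau\colon\N\to\N$, which is injective because points in distinct copies are $E_{1}^{\G}$-related and hence have images with distinct first coordinates. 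Writing $s(i,x)=(\tau(i),s_{i}(x))$ defines maps $s_{i}\colon K_{2,\omega}\to K_{2,\omega}$, and $s_{i}\in\End(\K_{2,\omega})$ because an edge $(x,y)\in E^{\K_{2,\omega}}$ yields the $E_{2}^{\G}$-edge $((i,x),(i,y))$, whose image $((\tau(i),s_{i}(x)),(\tau(i),s_{i}(y)))$ is again an $E_{2}^{\G}$-edge, i.e.~$(s_{i}(x),s_{i}(y))\in E^{\K_{2,\omega}}$. Hence $s=\bigsqcup_{i\in\N}^{\tau} s_{i}$ with $\tau\in\Inj(\N)$ and $s_{i}\in\End(\K_{2,\omega})$.

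For ``$\subseteq$'' in~\ref{item:end-G-aut-G-ii}, apply~\ref{item:end-G-aut-G-i} to $s\in\Aut(\G)$ to write $s=\bigsqcup_{i\in\N}^{\tau} s_{i}$. Surjectivity of $s$ makes $\tau$ surjective, so $\tau\in\Sym(\N)$, and makes each $s_{i}$ surjective (as $\tau$ is now bijective, the only preimages of the copy indexed $\tau(i)$ lie in the copy indexed $i$); injectivity of $s$ makes each $s_{i}$ injective. Finally, $s$ is an isomorphism, hence an embedding, hence reflects $E_{2}^{\G}$, which forces each $s_{i}$ to reflect $E^{\K_{2,\omega}}$; being an injective homomorphism reflecting the single relation of $\K_{2,\omega}$, $s_{i}$ is an embedding, and being bijective it is an automorphism of $\K_{2,\omega}$.

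I expect the only point requiring genuine care to be the claim that $s$ respects the copies of $K_{2,\omega}$ -- specifically the sub-case of two vertices in the same part of one copy, which is handled by routing through a common $E_{2}^{\G}$-neighbour in the opposite part (and uses that the parts of $\K_{2,\omega}$ are nonempty). Everything else reduces to unwinding the definitions of $E_{1}^{\G}$, $E_{2}^{\G}$ and of $\bigsqcup_{i\in\N}^{\tau} s_{i}$.
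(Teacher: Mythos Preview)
Your proposal is correct and follows essentially the same approach as the paper: the key step in both is showing that an endomorphism maps each copy of $K_{2,\omega}$ into a single copy, handled by the same two-case argument (an $E_{2}^{\G}$-edge versus a common $E_{2}^{\G}$-neighbour in the opposite part), after which $\tau$ and the $s_{i}$ are read off and checked. The only minor difference is in~\ref{item:end-G-aut-G-ii}: the paper simply asserts that $\bigsqcup_{i\in\N}^{\tau} s_{i}$ is bijective only if $\tau\in\Sym(\N)$ and $s_{i}\in\Aut(\K_{2,\omega})$ (implicitly using that a bijective endomorphism of $\K_{2,\omega}$ is automatically an automorphism), whereas you argue more explicitly via reflection of $E_{2}^{\G}$---both are fine.
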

\begin{proof}
  It is straightforward to see that the maps
  $\bigsqcup_{i\in\N}^{\tau} s_{i}$ in~\ref{item:end-G-aut-G-i} and
  $\bigsqcup_{i\in\N}^{\sigma}\alpha_{i}$ in~\ref{item:end-G-aut-G-ii}
  form endomorphisms and automorphisms,
  respectively. Thus,~\ref{item:end-G-aut-G-ii} follows immediately
  from~\ref{item:end-G-aut-G-i} since
  $\bigsqcup_{i\in\N}^{\tau} s_{i}$ can only be bijective if
  $\tau\in\Sym(\N)$ and $s_{i}\in\Aut(\K_{2,\omega})$.

  To show~\ref{item:end-G-aut-G-i}, we first note that for any
  $s\in\End(\K_{2,\omega})$ and any two elements $(i,x),(i,y)\in G$ in
  the same copy of $K_{2,\omega}$, the images $s(i,x)$ and $s(i,y)$
  are also contained in the same copy of $K_{2,\omega}$: Either $x,y$
  are connected in $\K_{2,\omega}$ in which case $s(i,x)$ and $s(i,y)$
  are $E_{2}^{\G}$-connected and therefore contained in the same copy,
  or $x,y$ are both connected in $\K_{2,\omega}$ to a common element
  $z$ in which case $s(i,x)$ and $s(i,y)$ are both
  $E_{2}^{\G}$-connected to $s(i,z)$ and therefore contained in the
  same copy. Setting $\tau(i)$ to be the index of this copy,
  i.e.~$s(i,x),s(i,y)\in\{\tau(i)\}\times K_{2,\omega}$, we obtain
  that $s$ can be written as $\bigsqcup_{i\in\N}^{\tau} s_{i}$ for
  some functions $s_{i}\colon K_{2,\omega}\to K_{2,\omega}$. By
  compatibility of $s$ with $E_{1}^{\G}$, the map $\tau$ needs to be
  injective. Further, the maps $s_{i}$ are endomorphisms of
  $\K_{2,\omega}$ since $s$ is compatible with $E_{2}^{\G}$.
\end{proof}
The representation in~\ref{item:end-G-aut-G-ii} readily yields the
following properties of $\G$ by means of lifting from $\Sym(\N)$ and
$\Aut(\K_{2,\omega})$:
\begin{lemma}\label{prop:G-nice}
  $\G$ is $\omega$-categorical, homogeneous, transitive and has no
  algebraicity.
\end{lemma}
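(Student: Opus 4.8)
The plan is to lift the four properties from $\Sym(\N)$ and $\Aut(\K_{2,\omega})$ through the description of $\Aut(\G)$ in Lemma~\ref{lem:end-G-aut-G}~\ref{item:end-G-aut-G-ii}, so I would first record the analogous facts for $\K_{2,\omega}$. A finite partial map of $\K_{2,\omega}$ preserving $E^{\K_{2,\omega}}$ and its complement respects the partition $\{A_{+1},A_{-1}\}$ -- two distinct vertices are non-adjacent exactly when they lie in the same part -- and since both parts are infinite such a map extends to a bijection carrying each part onto a part; hence $\K_{2,\omega}$ is homogeneous, and being homogeneous in a finite relational language it is $\omega$-categorical. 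There is only one atomic type of a singleton, so transitivity follows from homogeneity. Finally, if $Y\subseteq K_{2,\omega}$ is finite and $x\in A_e\setminus Y$, then for each $x'\in A_e\setminus Y$ the map fixing $Y$ pointwise and sending $x$ to $x'$ is a partial isomorphism, hence extends to an automorphism fixing $Y$ pointwise; thus $\Orb(x;Y)\supseteq A_e\setminus Y$ is infinite and $\K_{2,\omega}$ has no algebraicity.

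For transitivity of $\G$, given $(i,x),(j,y)\in G$ I would pick $\sigma\in\Sym(\N)$ with $\sigma(i)=j$ and, by transitivity of $\K_{2,\omega}$, some $\alpha_i\in\Aut(\K_{2,\omega})$ with $\alpha_i(x)=y$; then $\bigsqcup_{k\in\N}^{\sigma}\alpha_k$ (with $\alpha_k:=\id$ for $k\ne i$) is an automorphism of $\G$ by Lemma~\ref{lem:end-G-aut-G}~\ref{item:end-G-aut-G-ii} sending $(i,x)$ to $(j,y)$. For absence of algebraicity, let $Y\subseteq G$ be finite and $(i,x)\in G\setminus Y$; put $Y_i:=\{z\in K_{2,\omega}:(i,z)\in Y\}$, which is finite and avoids $x$. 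Since $\K_{2,\omega}$ has no algebraicity there are infinitely many $\alpha_i\in\Aut(\K_{2,\omega})$ fixing $Y_i$ pointwise with pairwise distinct values $\alpha_i(x)$; completing each to an automorphism of $\G$ via $\sigma:=\id$ and $\alpha_k:=\id$ for $k\ne i$ yields infinitely many automorphisms of $\G$ fixing $Y$ pointwise and moving $(i,x)$ to distinct points, so $\Orb((i,x);Y)$ is infinite.

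For $\omega$-categoricity and homogeneity I would argue via orbits on finite tuples, using that $E_1$ is exactly the ``different copy'' relation (so any partial isomorphism of $\G$ respects the equivalence ``lying in the same copy'') and that within one copy $E_2$ and its complement restrict to the edge relation of $\K_{2,\omega}$ and its complement. Consequently the $\Aut(\G)$-orbit of $((i_1,x_1),\dots,(i_n,x_n))$ is determined by the partition of $\{1,\dots,n\}$ recording which of the $i_k$ agree together with, for each block $B$, the $\Aut(\K_{2,\omega})$-orbit of $(x_k)_{k\in B}$; conversely, two tuples with the same data lie in one orbit, realised by choosing $\sigma\in\Sym(\N)$ matching the finitely many occurring indices compatibly with the partitions and, independently on each block's copy, an $\alpha_i\in\Aut(\K_{2,\omega})$ witnessing the orbit equality (and $\alpha_k:=\id$ elsewhere). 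As there are finitely many partitions of $\{1,\dots,n\}$ and, by $\omega$-categoricity of $\K_{2,\omega}$, finitely many $\Aut(\K_{2,\omega})$-orbits of $m$-tuples for each $m$, the action $\Aut(\G)\actson G^{n}$ has finitely many orbits, so $\G$ is $\omega$-categorical. For homogeneity, a finite partial isomorphism $m$ of $\G$ induces a finite partial injection of $\N$ on copy-indices (well-defined and injective since $m$ respects $E_1$) and, on each copy meeting its domain, a partial isomorphism of $\K_{2,\omega}$ (since $m$ respects $E_2$); extending these to $\sigma\in\Sym(\N)$ and to $\alpha_i\in\Aut(\K_{2,\omega})$ by homogeneity of $\K_{2,\omega}$ (with $\alpha_k:=\id$ off the domain), the map $\bigsqcup_{k\in\N}^{\sigma}\alpha_k\in\Aut(\G)$ extends $m$.

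None of the steps is a genuine obstacle; the only mildly delicate point is the bookkeeping in the last paragraph, keeping the action on copy-indices separate from the within-copy actions, which is clean precisely because $E_1$ and $E_2$ encode the ``different copy'' and ``within-copy edge'' relations by the very definition of $\G$.
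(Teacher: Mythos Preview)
Your proof is correct and follows essentially the same approach as the paper: lift each property from $\Sym(\N)$ and $\Aut(\K_{2,\omega})$ via the description of $\Aut(\G)$ in Lemma~\ref{lem:end-G-aut-G}\ref{item:end-G-aut-G-ii}. The only differences are presentational: the paper proves homogeneity first and derives both $\omega$-categoricity (from the finite language) and transitivity (from the absence of loops) as immediate consequences, whereas you establish $\omega$-categoricity by a separate orbit-counting argument and transitivity directly from transitivity of $\K_{2,\omega}$; your extra orbit-counting step is sound but redundant once homogeneity is in hand.
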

\begin{proof}
  We start by showing that $\G$ is homogeneous which will also yield
  the $\omega$-categoricity since $\G$ has a finite language. Let
  $\bar{a}=(a_{1},\dots,a_{n})$ and $\bar{b}=(b_{1},\dots,b_{n})$ be
  tuples in $G$ and let $m\colon\bar{a}\mapsto\bar{b}$ be a finite
  partial isomorphism. Writing $a_{k}=(i_{k},x_{k})$ and
  $b_{k}=(j_{k},y_{k})$, we note that $i_{k}$ and $i_{\ell}$ coincide
  if and only if $j_{k}$ and $j_{\ell}$ coincide (for otherwise,
  either $m$ or $m^{-1}$ would not be compatible with
  $E_{1}^{\G}$). Hence, the map $i_{k}\mapsto j_{k}$ is a well-defined
  finite partial bijection and can thus easily be extended to some
  $\sigma\in\Sym(\N)$ (in other words, the structure with domain $\N$
  and without any relations is homogeneous). Further, if
  $i_{k_{1}}=\ldots=i_{k_{N}}=:i$, then
  $m_{i}\colon x_{k_{1}}\mapsto y_{k_{1}},\dots,x_{k_{N}}\mapsto
  y_{k_{N}}$ is a finite partial isomorphism of $\K_{2,\omega}$ since
  $m$ is a finite partial isomorphism with respect to
  $E_{2}^{\G}$. The graph $\K_{2,\omega}$ is homogeneous, so $m_{i}$
  extends to $\alpha_{i}\in\Aut(\K_{2,\omega})$. Setting
  $\alpha_{i}=\id_{K_{2,\omega}}$ for all $i$ such that no $x_{k}$ is
  contained in the $i$-th copy of $K_{2,\omega}$ and putting
  $\alpha:=\bigsqcup_{i\in\N}^{\sigma}\alpha_{i}\in\Aut(\G)$, we
  obtain an extension of $m$.

  Next, observe that $\G$ is transitive: given $a,b\in G$, the map
  $a\mapsto b$ is a finite partial isomorphism since neither
  $E_{1}^{\G}$ nor $E_{2}^{\G}$ contain any loops. Thus, homogeneity
  yields $\alpha\in\Aut(\G)$ with $\alpha(a)=b$.

  Finally, $\G$ does not have algebraicity since $\K_{2,\omega}$ does
  not have algebraicity: For a finite set $Y\subseteq G$ and
  $a=(i_{0},x_{0})\in G\setminus Y$, we set
  $Y_{i_{0}}:=\set{y\in K_{2,\omega}}{(i_{0},y)\in Y}\not\ni x_{0}$
  and note that $\Orb_{\G}(a;Y)$ encompasses the infinite set
  $\{i_{0}\}\times\Orb_{\K_{2,\omega}}(x_{0};Y_{i_{0}})$ as witnessed
  by the automorphisms $\bigsqcup_{i\in\N}^{\id}\alpha_{i}$ where
  $\alpha_{i_{0}}\in\Stab_{\K_{2,\omega}}(Y_{i_{0}})$ and
  $\alpha_{i}=\id_{K_{2,\omega}}$ for $i\neq i_{0}$.
\end{proof}
\begin{remark}\label{rem:G-nice}
  An alternative construction of $\G$ is as a first-order reduct of
  the \emph{free superposition} (see~\cite{BodirskyRamsey}, this is a
  type of construction to combine two structures with different
  signatures in a ``free'' way) of $\K_{2,\omega}$ with the
  \emph{irreflexive} equivalence relation with countably many
  equivalence classes of countable size. Since both structures are
  transitive and have no algebraicity, the superposition structure has
  the same properties which are then inherited by $\G$ since a
  first-order reduct can only have additional automorphisms.
\end{remark}
To simplify the presentation, we additionally define a few notational
shorthands concerning endomorphisms of $\G$:
\begin{notation}\label{not:shorthands-tilde}
  \hspace{0mm}
  \begin{enumerate}[label=(\roman*)]
  \item For $p=\bigsqcup_{i\in\N}^{\xi}p_{i}\in\End(\G)$, we define
    $\tilde{p}:=\xi\in\Inj(\N)$.
  \item Given $p_{0},\dots,p_{k}\in\End(\G)$ and
    $\varphi(s):=p_{k}sp_{k-1}s\dots sp_{0}$, $s\in\End(\G)$, we
    define
    $\tilde{\varphi}(\tau):=\tilde{p}_{k}\tau\tilde{p}_{k-1}\tau\dots\tau\tilde{p}_{0}$,
    $\tau\in\Inj(\N)$.
  \end{enumerate}
\end{notation}

\subsection{Proof strategy}
\label{sec:proof-strategy}
The goal of Section~\ref{sec:counterexample} is to prove the
following:
\begin{theorem}\label{thm:counterexample}
  On the endomorphism monoid of the structure $\G$, the pointwise
  topology is strictly finer than the Zariski topology.
\end{theorem}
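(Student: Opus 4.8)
The plan is to exhibit a single set that is open in the pointwise topology but not in the Zariski topology; since the Zariski topology is always coarser (Section~\ref{sec:topologies}), this suffices. I would take
\[
  B := \bigl\{\, s \in \End(\G) : \sgn\bigl(s^{(0)}\bigr) = +1 \,\bigr\},
\]
where, writing $s = \bigsqcup_{i\in\N}^{\tilde s} s^{(i)}$ via Lemma~\ref{lem:end-G-aut-G} (with $\tilde s \in \Inj(\N)$ as in Notation~\ref{not:shorthands-tilde}), the endomorphism $s^{(0)} \in \End(\K_{2,\omega})$ denotes the fibre of $s$ over the $0$th copy. Since $\sgn(s^{(0)}) = +1$ exactly when $s^{(0)}(a_{+1}) \in A_{+1}$, we have $B = \bigcup_{n\in\N,\, x\in A_{+1}} \{\, s : s(0,a_{+1}) = (n,x) \,\}$, a union of $\TT_{pw}$-open sets; in fact $B$ is $\TT_{pw}$-clopen, as $\compl{B}$ has the same form with $A_{-1}$ in place of $A_{+1}$, and both $B$ and $\compl{B}$ are nonempty.

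To see that $B \notin \TT_{\text{Zariski}}$ it is enough, $B$ being nonempty, to show that $\compl{B} = \{\, s : \sgn(s^{(0)}) = -1 \,\}$ is Zariski-dense. So fix a nonempty $\TT_{\text{Zariski}}$-basic-open set $O = \bigcap_{j=1}^m M_{\varphi_j,\psi_j}$, with $\varphi_j(s) = p_{j,k_j} s p_{j,k_j-1} s \cdots s p_{j,0}$ and $\psi_j$ of analogous shape, and pick some $s^* \in O$; the task is to produce $s \in O$ with $\sgn(s^{(0)}) = -1$. The computation rests on composition respecting the product structure of $\G$: for $p = \bigsqcup_i^{\tilde p} p^{(i)}$ one has $p \circ s = \bigsqcup_i^{\tilde p \tilde s}\bigl(p^{(\tilde s(i))} \circ s^{(i)}\bigr)$, so for a word $\varphi$ the index-part of $\varphi(s)$ is precisely $\tilde\varphi(\tilde s)$ (Notation~\ref{not:shorthands-tilde}), while the fibre of $\varphi(s)$ over a fixed copy is an alternating composition of fibres of the $p_{j,\bullet}$ with fibres of $s$, the sequence of copies traversed being governed by $\tilde s$ alone.

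I would define $s$ to have the same index-part as $s^*$ and the same fibres as $s^*$ except over the $0$th copy, where I replace $(s^*)^{(0)}$ by some sign-$(-1)$ automorphism $\beta$ of $\K_{2,\omega}$ (these exist, as $\Aut(\K_{2,\omega})$ contains part-swapping automorphisms): then $s \in \End(\G)$ and $\sgn(s^{(0)}) = -1$, so $s \in \compl{B}$. Because $\tilde s = \widetilde{s^*}$, every index-part $\tilde\varphi_j(\tilde s), \tilde\psi_j(\tilde s)$ coincides with the corresponding one for $s^*$; hence if $\varphi_j(s^*)$ and $\psi_j(s^*)$ already differ in their index-parts, so do $\varphi_j(s)$ and $\psi_j(s)$. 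If instead they agree in index-part they must differ in some fibre, and over that copy the fibres of $\varphi_j(s), \psi_j(s)$ differ from those of $\varphi_j(s^*), \psi_j(s^*)$ only through the boundedly many — a word has fixed length — applications of $\beta$ in place of $(s^*)^{(0)}$ occurring whenever the $0$th copy arises as an $s$-input, all intervening fibres of the $p_{j,\bullet}, q_{j,\bullet}$ being fixed endomorphisms of $\K_{2,\omega}$ independent of $\beta$. It then remains to choose $\beta$ — agreeing with a fixed part-swap on a large enough finite set and generic otherwise, possible since $\K_{2,\omega}$ has no algebraicity and only finitely much data enters — so that none of the finitely many inequalities $\varphi_j(s) \neq \psi_j(s)$ is destroyed.

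I expect this last step to be the main obstacle: ruling out that flipping the sign over the $0$th copy could accidentally force $\varphi_j(s) = \psi_j(s)$ even though $\varphi_j(s^*) \neq \psi_j(s^*)$. This requires a careful analysis of how a word ``sees'' the complete bipartite fine structure — in particular of the interaction of the part-swap $\beta$ with the fixed fibres $p_{j,\bullet}^{(c)}, q_{j,\bullet}^{(c)} \in \End(\K_{2,\omega})$ sitting between consecutive $s$-inputs — and, if necessary, also varying the index-part of $s$ (routing the $0$th copy to a fresh high copy). It is precisely here that the algebraicity of the model-complete core of $\G$ — exactly what obstructs the sufficient conditions of Section~\ref{sec:two-sets-sufficient}, the ``two copies of the core meeting in one point'' of Lemma~\ref{lem:no-alg-two-copies} being unavailable — has to be turned to our advantage: $B$ is designed to detect precisely the ambiguity between $s(0,a_{+1})$ and $s(0,a_{-1})$ that the core's algebraicity creates.
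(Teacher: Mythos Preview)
Your choice of witness set $B$ is essentially the paper's (the paper uses the smaller $\{s:s(0,a_{+1})=(0,a_{+1})\}\subseteq B$), and your observation that the fibre structure of words in $\End(\G)$ factors through $\Inj(\N)$ is exactly right. But the proof is incomplete precisely at the point you flag as ``the main obstacle'', and that point is not a detail---it is the entire content of the theorem.

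The difficulty with modifying only the $0$th fibre of an arbitrary $s^{*}\in O$ is this: when $\tilde\varphi_{j}(\widetilde{s^{*}})=\tilde\psi_{j}(\widetilde{s^{*}})$, the witnessing copy $i$ for $\varphi_{j}(s^{*})\neq\psi_{j}(s^{*})$ may be one whose evaluation trajectory passes through copy~$0$ as an $s$-input, and the number of such passages in $\varphi_{j}$ and in $\psi_{j}$ need not agree (the intermediate index maps $\tilde p_{j,\bullet},\tilde q_{j,\bullet}$ are different). So replacing $(s^{*})^{(0)}$ by a sign-$(-1)$ map $\beta$ inserts $\beta$ a possibly different number of times on each side, and no parity argument is available. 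Your suggestion to take $\beta$ ``generic'' does not obviously avoid the finitely many bad equalities: each condition $\varphi_{j}(s)=\psi_{j}(s)$ is an identity between two compositions in which $\beta$ appears at several interleaved positions, not a single polynomial equation one can avoid by genericity. Nor does ``routing the $0$th copy to a fresh high copy'' help without further argument, since changing $\tilde s$ can destroy inequalities in case~(a) as well.

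The paper sidesteps this by \emph{not} starting from an arbitrary $s^{*}$: it fixes the single test point $\id_{\N}\ltimes c_{+1}$, whose fibres are all equal to $c_{+1}$, and shows that every Zariski-basic neighbourhood of it contains some $\tau\ltimes c_{-1}$. The uniform fibres are what make the parity argument clean: for equal-length terms $\varphi,\psi$ (the hard case, Lemma~\ref{lem:zariski-sets-equal-length-equal-open-nbh}), passing from $c_{+1}$ to $c_{-1}$ inserts \emph{the same number} of sign flips on both sides, so the inequality at the witnessing point is preserved. For different-length terms (Lemma~\ref{lem:zariski-sets-diff-length-dense}) one perturbs the index part $\tau$ instead, using a density argument in $\Inj(\N)$. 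The two regimes are then combined via a Baire-type intersection. If you want to salvage your approach, the cleanest fix is to abandon the arbitrary $s^{*}$ and instead show directly that $\compl{B}$ meets every Zariski-basic neighbourhood of $\id_{\N}\ltimes c_{+1}$---which is exactly the paper's argument.
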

\begin{remark}\label{rem:counterexample-core}
  Before we go into the details of the proof, let us remark that the
  structure $\G$ needs to have an infinite model-complete core which
  has algebraicity in order to have a chance of satisfying
  Theorem~\ref{thm:counterexample} -- for otherwise,
  Theorem~\ref{thm:suff-cond-zariski-equ-pw} would apply.

  The model-complete core of $\K_{2,\omega}$ is just the graph
  consisting of a single edge, as witnessed for instance by the
  substructure induced on $\{a_{+1},a_{-1}\}$ and the homomorphism
  $c_{+1}\colon\K_{2,\omega}\to\{a_{+1},a_{-1}\}$. We claim that the
  model-complete core of $\G$ is the complete graph on countably many
  vertices where each point has as fine structure a single edge,
  i.e. the substructure $\C$ of $\G$ induced on
  $C:=\N\times\{a_{+1},a_{-1}\}\subseteq G$; see
  Figure~\ref{fig:core-of-G}.

  Similarly to the proof of Lemma~\ref{lem:end-G-aut-G}, one easily
  checks that (here, $c_{\pm 1}$ are considered as self-maps of
  $\{a_{+1},a_{-1}\}$)
  \begin{align*}
    \End(\C)&=\set{\bigsqcup_{i\in\N}\!^{\tau}\gamma_{i}}{\tau\in\Inj(\N),\gamma_{i}\in\{c_{+1},c_{-1}\}},\\
    \Aut(\C)&=\set{\bigsqcup_{i\in\N}\!^{\sigma}\gamma_{i}}{\sigma\in\Sym(\N),\gamma_{i}\in\{c_{+1},c_{-1}\}}.
  \end{align*}
  Thus, any endomorphism is locally interpolated by an automorphism,
  and $\C$ is indeed a model-complete core. Additionally, $\G$ and
  $\C$ are homomorphically equivalent -- an example of a homomorphism
  $\G\to\C$ is given by $\bigsqcup_{i\in\N}^{\id}c_{+1}$ (where
  $c_{+1}$ is considered as a map defined on $K_{2,\omega}$).

  Finally, $\C$ has algebraicity: any automorphism of $\C$ which
  stabilises $Y:=\{(0,a_{+1})\}$ also stabilises $a:=(0,a_{-1})$, so
  the $Y$-related orbit of $a$ is finite.
\end{remark}
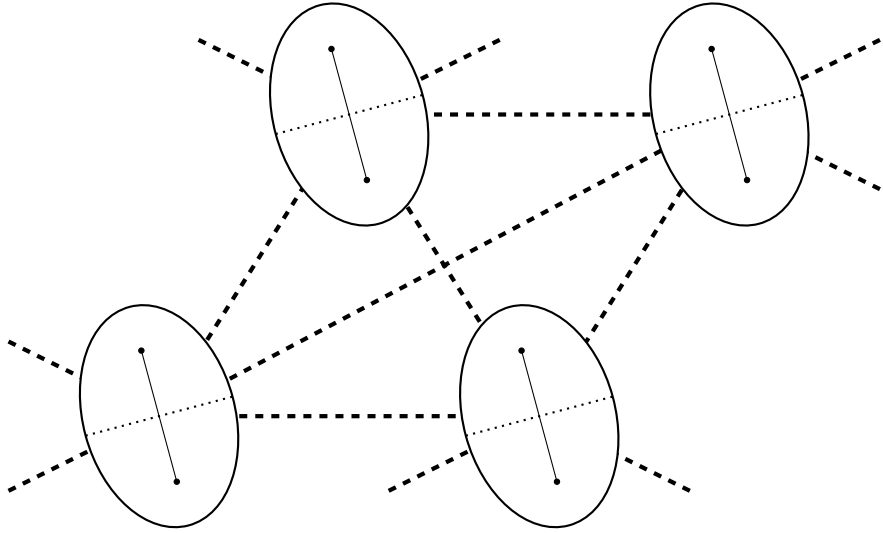
\begin{figure}[h]\label{fig:core-of-G}
  \begin{center}
    \begin{tikzpicture}
      \coordinate (origin) at (0,0);
      \coordinate (offset1) at (5,0);
      \coordinate (offset2) at (2.5,4);
      \draw[dashed,ultra thick] (origin) -- ($(origin)+(offset1)$) --
      ($(origin)+(offset1)+(offset2)$) -- ($(origin)+(offset2)$) --
      (origin);
      \draw[dashed,ultra thick] (origin) --
      ($(origin)+(offset1)+(offset2)$);
      \draw[dashed,ultra thick] ($(origin)+(offset1)$) --
      ($(origin)+(offset2)$);
      \draw[dashed,ultra thick] (origin) -- (-2,1);
      \draw[dashed,ultra thick] (origin) -- (-2,-1);
      \draw[dashed,ultra thick] ($(origin)+(offset1)$) -- (7,-1);
      \draw[dashed,ultra thick] ($(origin)+(offset1)$) -- (3,-1);
      \draw[dashed,ultra thick] ($(origin)+(offset2)$) -- (0.5,5);
      \draw[dashed,ultra thick] ($(origin)+(offset2)$) -- (4.5,5);
      \draw[dashed,ultra thick] ($(origin)+(offset1)+(offset2)$) -- (9.5,3);
      \draw[dashed,ultra thick] ($(origin)+(offset1)+(offset2)$) -- (9.5,5);
      \begin{scope}[rotate=15]
        \coordinate (Aorigin) at (0,0);
        \coordinate (Aoffset1) at (5,0);
        \coordinate (Aoffset2) at (2.5,4);
        \filldraw[color=black,thick,fill=white] (Aorigin) ellipse (1 and
        1.5);
        \draw[dotted,thick] (-1,0) -- (1,0);
        \coordinate (AP) at (0,-0.9);
        \coordinate (AQ) at (0,0.9);
        \filldraw (AP) circle (1pt);
        \filldraw (AQ) circle (1pt);
        \draw (AP) -- (AQ);
      \end{scope}      
      \begin{scope}[shift=(offset1),rotate=15]
        \coordinate (Borigin) at (0,0);
        \coordinate (Boffset1) at (5,0);
        \coordinate (Boffset2) at (2.5,4);
        \filldraw[color=black,thick,fill=white] (Borigin)
        ellipse (1 and 1.5);
        \draw[dotted,thick] (-1,0) -- (1,0);
        \coordinate (BP) at (0,-0.9);
        \coordinate (BQ) at (0,0.9);
        \filldraw (BP) circle (1pt);
        \filldraw (BQ) circle (1pt);
        \draw (BP) -- (BQ);
      \end{scope}
      \begin{scope}[shift=(offset2),rotate=15]
        \coordinate (Corigin) at (0,0);
        \coordinate (Coffset1) at (5,0);
        \coordinate (Coffset2) at (2.5,4);
        \filldraw[color=black,thick,fill=white] (Corigin)
        ellipse (1 and 1.5);
        \draw[dotted,thick] (-1,0) -- (1,0);
        \coordinate (CP) at (0,-0.9);
        \coordinate (CQ) at (0,0.9);
        \filldraw (CP) circle (1pt);
        \filldraw (CQ) circle (1pt);
        \draw (CP) -- (CQ);
      \end{scope}
      \begin{scope}[shift=($(offset1)+(offset2)$),rotate=15]
        \coordinate (Dorigin) at (0,0);
        \coordinate (Doffset1) at (5,0);
        \coordinate (Doffset2) at (2.5,4);
        \filldraw[color=black,thick,fill=white] (Dorigin)
        ellipse (1 and 1.5);
        \draw[dotted,thick] (-1,0) -- (1,0);
        \coordinate (DP) at (0,-0.9);
        \coordinate (DQ) at (0,0.9);
        \filldraw (DP) circle (1pt);
        \filldraw (DQ) circle (1pt);
        \draw (DP) -- (DQ);
      \end{scope}
    \end{tikzpicture}
  \end{center}
  \caption{The model-complete core of $\G$: complete graph on
    countably many vertices (dashed) where each point has a single
    edge (solid) as fine structure.}
\end{figure}
In order to show Theorem~\ref{thm:counterexample}, we will prove that
$\TT_{\text{Zariski}}$-open sets on $\End(\G)$ cannot determine the
sign of the components $s_{i}$ of $s=\bigsqcup_{i\in\N}^{\tau}s_{i}$,
in other words decide whether the functions $s_{i}$ switch the two
parts of $\K_{2,\omega}$ or not. On the other hand,
$\TT_{pw}\vert_{\End(\G)}$-open sets can determine the sign of
finitely many components, thus showing
$\TT_{pw}\vert_{\End(\G)}\neq\TT_{\text{Zariski}}$. More precisely, we
will prove that if a $\TT_{\text{Zariski}}$-generating set
$M_{\varphi,\psi}$ contains $\id_{\N}\ltimes c_{+1}$, then it also
contains $\tau\ltimes c_{-1}$ for all elements $\tau$ of a ``big''
subset of $\Inj(\N)$ -- where ``big'' means either
``$\TT_{pw}\vert_{\Inj(\N)}$-open neighbourhood of $\id_{\N}$'' (if
the terms $\varphi$ and $\psi$ have equal lengths; see
Lemmas~\ref{lem:zariski-sets-equal-length-different-open-nbh}
and~\ref{lem:zariski-sets-equal-length-equal-open-nbh}) or
``$\TT_{pw}\vert_{\Inj(\N)}$-dense and open set'' (if the terms
$\varphi$ and $\psi$ have different lengths; see
Lemma~\ref{lem:zariski-sets-diff-length-dense}).

Our (almost trivial) first lemma analogously holds in a more general
setting. Since we only apply it in case of terms of equal lengths, we
formulate it in the present form.
\begin{lemma}\label{lem:zariski-sets-equal-length-different-open-nbh}
  \hspace{0mm}
  \begin{enumerate}[label=(\roman*)]
  \item Let $k\geq 1$ and let
    $\xi_{0},\dots,\xi_{k},\theta_{0},\dots,\theta_{k}\in\Inj(\N)$ as
    well as
    $\tilde{\varphi}(\tau):=\xi_{k}\tau\xi_{k-1}\tau\dots\tau\xi_{0}$
    and
    $\tilde{\psi}(\tau):=\theta_{k}\tau\theta_{k-1}\tau\dots\tau\theta_{0}$,
    $\tau\in\Inj(\N)$.

    If $\tilde{\varphi}(\id_{\N})\neq\tilde{\psi}(\id_{\N})$, then
    $M_{\tilde{\varphi},\tilde{\psi}}=\set{\tau\in\Inj(\N)}{\tilde{\varphi}(\tau)\neq\tilde{\psi}(\tau)}$
    is a $\TT_{pw}\vert_{\Inj(\N)}$-open neighbourhood of $\id_{\N}$.
  \item Let $k\geq 1$ and let
    $p_{0},\dots,p_{k},q_{0},\dots,q_{k}\in\End(\G)$ as well as
    $\varphi(s):=p_{k}sp_{k-1}s\dots sp_{0}$ and
    $\psi(s):=q_{k}sq_{k-1}s\dots sq_{0}$, $s\in\End(\G)$. Assume
    $\tilde{\varphi}(\id_{\N})\neq\tilde{\psi}(\id_{\N})$ (using the
    shorthand from Notation~\ref{not:shorthands-tilde}).
    
    Then there exists a $\TT_{pw}\vert_{\Inj(\N)}$-open neighbourhood
    $U$ of $\id_{\N}$ such that
    $\tau\ltimes t\in
    M_{\varphi,\psi}=\set{s\in\End(\G)}{\varphi(s)\neq\psi(s)}$ for
    all $\tau\in U$ and $t\in\End(\K_{2,\omega})$. In particular,
    $\tau\ltimes c_{-1}\in M_{\varphi,\psi}$ for all $\tau\in U$.
  \end{enumerate}
\end{lemma}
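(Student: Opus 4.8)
Part~(i) is essentially immediate once one unwinds the definitions. The map $\tilde\varphi\colon\Inj(\N)\to\Inj(\N)$ is built from finitely many fixed injections $\xi_j$ by composition interleaved with copies of the argument $\tau$; composition of injections is continuous in the pointwise topology (indeed $\Inj(\N)$ with $\TT_{pw}$ is a topological monoid), so $\tilde\varphi$ and $\tilde\psi$ are $\TT_{pw}$-continuous. Hence $M_{\tilde\varphi,\tilde\psi}=\{\tau:\tilde\varphi(\tau)\neq\tilde\psi(\tau)\}$ is the preimage of the \emph{open} set $\{(\mu,\nu)\in\Inj(\N)^2:\mu\neq\nu\}$ under the continuous map $\tau\mapsto(\tilde\varphi(\tau),\tilde\psi(\tau))$ — here we use that $\Inj(\N)$ is Hausdorff, so the diagonal is closed and its complement open. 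Thus $M_{\tilde\varphi,\tilde\psi}$ is $\TT_{pw}|_{\Inj(\N)}$-open, and it contains $\id_\N$ precisely because of the hypothesis $\tilde\varphi(\id_\N)\neq\tilde\psi(\id_\N)$. Concretely, one can even exhibit the basic neighbourhood: pick $m\in\N$ with $\tilde\varphi(\id_\N)(m)\neq\tilde\psi(\id_\N)(m)$; then $U:=\{\tau:\tilde\varphi(\tau)(m)=\tilde\varphi(\id_\N)(m)\text{ and }\tilde\psi(\tau)(m)=\tilde\psi(\id_\N)(m)\}$ is a basic $\TT_{pw}$-open neighbourhood of $\id_\N$ contained in $M_{\tilde\varphi,\tilde\psi}$ — it is open because fixing finitely many values of $\tilde\varphi(\tau)$ and $\tilde\psi(\tau)$ amounts to fixing finitely many values of $\tau$, again by continuity of composition.

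For part~(ii) the key point is the compatibility of the ``index map'' $s\mapsto\tilde s$ with composition: if $p=\bigsqcup_{i}^{\xi}p_i$ and $q=\bigsqcup_i^{\eta}q_i$ lie in $\End(\G)$, then by the description in Lemma~\ref{lem:end-G-aut-G}(i) the composite $pq$ has index map $\xi\eta=\tilde p\,\tilde q$, i.e.\ $\widetilde{pq}=\tilde p\tilde q$. Consequently, for $\varphi(s)=p_ksp_{k-1}s\cdots sp_0$ we get $\widetilde{\varphi(s)}=\tilde p_k\tilde s\,\tilde p_{k-1}\tilde s\cdots\tilde s\,\tilde p_0=\tilde\varphi(\tilde s)$, and likewise $\widetilde{\psi(s)}=\tilde\psi(\tilde s)$. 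Now observe: if $\varphi(s)=\psi(s)$ as elements of $\End(\G)$, then in particular they have the same index map, so $\tilde\varphi(\tilde s)=\tilde\psi(\tilde s)$. Taking the contrapositive, $\tilde\varphi(\tilde s)\neq\tilde\psi(\tilde s)$ implies $s\in M_{\varphi,\psi}$. So it suffices to force $\tilde s$ into $M_{\tilde\varphi,\tilde\psi}$.

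Thus I would apply part~(i) to obtain the $\TT_{pw}|_{\Inj(\N)}$-open neighbourhood $U$ of $\id_\N$ with $U\subseteq M_{\tilde\varphi,\tilde\psi}$ (the hypothesis $\tilde\varphi(\id_\N)\neq\tilde\psi(\id_\N)$ is exactly what part~(i) needs). Then for any $\tau\in U$ and any $t\in\End(\K_{2,\omega})$, the endomorphism $\tau\ltimes t=\bigsqcup_i^{\tau}t$ has index map $\widetilde{\tau\ltimes t}=\tau\in U\subseteq M_{\tilde\varphi,\tilde\psi}$, hence $\tilde\varphi(\tau)\neq\tilde\psi(\tau)$, hence by the observation above $\varphi(\tau\ltimes t)\neq\psi(\tau\ltimes t)$, i.e.\ $\tau\ltimes t\in M_{\varphi,\psi}$. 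The final sentence is just the special case $t=c_{-1}$. There is no real obstacle here: the only thing to be careful about is the bookkeeping that $\widetilde{(\cdot)}$ is a monoid homomorphism $\End(\G)\to\Inj(\N)$ and that it is realised on the ``diagonal'' elements $\tau\ltimes t$ as $\tau\mapsto\tau$ — both of which follow directly from Lemma~\ref{lem:end-G-aut-G}(i) and Notation~\ref{not:wreath-prod}. The substance of the counterexample is postponed to the later lemmas, where one must show that such an $M_{\varphi,\psi}$ containing $\id_\N\ltimes c_{+1}$ also contains $\tau\ltimes c_{-1}$; the present lemma handles only the easy ``equal-length, unequal-at-$\id_\N$'' sub-case.
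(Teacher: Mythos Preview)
Your proposal is correct and follows essentially the same approach as the paper: for (i) you use continuity of $\tilde\varphi,\tilde\psi$ with respect to $\TT_{pw}$ to conclude that $M_{\tilde\varphi,\tilde\psi}$ is open, and for (ii) you set $U:=M_{\tilde\varphi,\tilde\psi}$ and use that $\widetilde{\varphi(\tau\ltimes t)}=\tilde\varphi(\tau)$ (via the monoid homomorphism $s\mapsto\tilde s$) to deduce $\tau\ltimes t\in M_{\varphi,\psi}$ --- exactly as in the paper, only spelled out in more detail.
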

The second lemma really requires the terms to be of equal length.
\begin{lemma}\label{lem:zariski-sets-equal-length-equal-open-nbh}
  Let $k\geq 1$ and let
  $p_{0},\dots,p_{k},q_{0},\dots,q_{k}\in\End(\G)$ as well as
  $\varphi(s):=p_{k}sp_{k-1}s\dots sp_{0}$ and
  $\psi(s):=q_{k}sq_{k-1}s\dots sq_{0}$, $s\in\End(\G)$. Assume
  $\varphi(\id_{\N}\ltimes c_{+1})\neq\psi(\id_{\N}\ltimes c_{+1})$
  but $\tilde{\varphi}(\id_{\N})=\tilde{\psi}(\id_{\N})$.
    
  Then there exists a $\TT_{pw}\vert_{\Inj(\N)}$-open neighbourhood
  $U$ of $\id_{\N}$ such that
  $\tau\ltimes c_{-1}\in
  M_{\varphi,\psi}=\set{s\in\End(\G)}{\varphi(s)\neq\psi(s)}$ for all
  $\tau\in U$.
\end{lemma}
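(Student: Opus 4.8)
The plan is to combine the description of $\End(\G)$ in Lemma~\ref{lem:end-G-aut-G} with the composition rule $\bigl(\bigsqcup_{i\in\N}^{\tau}s_{i}\bigr)\bigl(\bigsqcup_{i\in\N}^{\eta}t_{i}\bigr)=\bigsqcup_{i\in\N}^{\tau\eta}(s_{\eta(i)}t_{i})$ and the identities $\sgn(c_{\epsilon})=\epsilon$, $c_{\epsilon}t=c_{\epsilon\sgn(t)}$ and $c_{a}c_{b}=c_{ab}$ (for $t\in\End(\K_{2,\omega})$, $a,b,\epsilon\in\{+1,-1\}$). Writing $p_{j}=\bigsqcup_{i\in\N}^{\tilde{p}_{j}}p_{j,i}$ and $q_{j}=\bigsqcup_{i\in\N}^{\tilde{q}_{j}}q_{j,i}$, substituting $s=\tau\ltimes c_{\epsilon}$ into $\varphi$ and simplifying with these identities yields $\varphi(\tau\ltimes c_{\epsilon})=\bigsqcup_{i\in\N}^{\tilde{\varphi}(\tau)}F^{\varphi}_{\epsilon,i}(\tau)$ where, since $k\ge 1$ forces at least one factor $c_{\epsilon}$ to occur, the $i$-th $\K_{2,\omega}$-component collapses to the ``edge-valued'' endomorphism
\begin{displaymath}
  F^{\varphi}_{\epsilon,i}(\tau)=p_{k,\alpha_{k}(i)}\circ c_{\,\epsilon^{k}\prod_{j=0}^{k-1}\sgn(p_{j,\alpha_{j}(i)})},
\end{displaymath}
with $\alpha_{0}(i):=i$ and $\alpha_{j}(i):=(\tau\tilde{p}_{j-1}\tau\tilde{p}_{j-2}\cdots\tau\tilde{p}_{0})(i)$ for $1\le j\le k$; analogously $\psi(\tau\ltimes c_{\epsilon})=\bigsqcup_{i\in\N}^{\tilde{\psi}(\tau)}F^{\psi}_{\epsilon,i}(\tau)$ with $q_{j,\beta_{j}(i)}$ replacing $p_{j,\alpha_{j}(i)}$, where $\beta_{j}$ is built from the $\tilde{q}_{j}$ in the same way.

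Two observations then do the work. \emph{Sign transfer}: changing $\epsilon$ from $+1$ to $-1$ multiplies the subscript of the $c$-factor in \emph{both} $F^{\varphi}_{\epsilon,i}$ and $F^{\psi}_{\epsilon,i}$ by the \emph{same} sign $(-1)^{k}$ -- this is precisely where equal length of $\varphi$ and $\psi$ enters. Since $c_{\delta}$ and $c_{-\delta}$ differ only by interchanging the images of the two parts $A_{+1},A_{-1}$, a direct check gives $p_{k,a}\circ c_{\delta}=q_{k,b}\circ c_{\gamma}$ if and only if $p_{k,a}\circ c_{-\delta}=q_{k,b}\circ c_{-\gamma}$; hence, for all $i$ and $\tau$, $F^{\varphi}_{+1,i}(\tau)=F^{\psi}_{+1,i}(\tau)$ if and only if $F^{\varphi}_{-1,i}(\tau)=F^{\psi}_{-1,i}(\tau)$. \emph{Local constancy}: each of the quantities $\tilde{\varphi}(\tau)(i)$ and $\alpha_{j}(\tau)(i)$ (and their $\psi$-counterparts) is produced from $i$ by finitely many applications of the fixed maps $\tilde{p}_{0},\dots,\tilde{p}_{k}$ interspersed with applications of $\tau$; thus if $\tau$ fixes each of the finitely many values to which $\tau$ is applied in the computation with $\tau=\id_{\N}$, an immediate induction shows these computations return the same intermediate values as the $\id_{\N}$ ones, so $\tilde{\varphi}(\tau)(i)=\tilde{\varphi}(\id_{\N})(i)$ and $F^{\varphi}_{\epsilon,i}(\tau)=F^{\varphi}_{\epsilon,i}(\id_{\N})$ for both signs $\epsilon$, and likewise for $\psi$.

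Putting this together: because $\tilde{\varphi}(\id_{\N})=\tilde{\psi}(\id_{\N})$, the hypothesis $\varphi(\id_{\N}\ltimes c_{+1})\ne\psi(\id_{\N}\ltimes c_{+1})$ forces $F^{\varphi}_{+1,i_{0}}(\id_{\N})\ne F^{\psi}_{+1,i_{0}}(\id_{\N})$ for some $i_{0}\in\N$, and then sign transfer gives $F^{\varphi}_{-1,i_{0}}(\id_{\N})\ne F^{\psi}_{-1,i_{0}}(\id_{\N})$; fix $x_{0}\in K_{2,\omega}$ witnessing this. Let $N\subseteq\N$ be the finite set of all values to which $\tau$ is applied in computing $\tilde{\varphi}(\cdot)(i_{0})$ and $\tilde{\psi}(\cdot)(i_{0})$ at $\tau=\id_{\N}$ (i.e.\ the values $\alpha_{1}(\id_{\N})(i_{0}),\dots,\alpha_{k}(\id_{\N})(i_{0})$ and their $\psi$-analogues), and set $U:=\set{\tau\in\Inj(\N)}{\tau(n)=n\text{ for all }n\in N}$, a $\TT_{pw}\vert_{\Inj(\N)}$-open neighbourhood of $\id_{\N}$. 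For $\tau\in U$, local constancy gives $\tilde{\varphi}(\tau)(i_{0})=\tilde{\psi}(\tau)(i_{0})$ together with $F^{\varphi}_{-1,i_{0}}(\tau)=F^{\varphi}_{-1,i_{0}}(\id_{\N})$ and $F^{\psi}_{-1,i_{0}}(\tau)=F^{\psi}_{-1,i_{0}}(\id_{\N})$, so evaluating $\varphi(\tau\ltimes c_{-1})$ and $\psi(\tau\ltimes c_{-1})$ at $(i_{0},x_{0})$ yields the same first coordinate but different second coordinates, whence $\tau\ltimes c_{-1}\in M_{\varphi,\psi}$. The only genuinely delicate points are the collapse formula for $F^{\varphi}_{\epsilon,i}$ and the sign-transfer step; the remaining bookkeeping in $\Inj(\N)$ is routine.
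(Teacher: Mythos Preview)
Your proof is correct and follows essentially the same two-step structure as the paper's: first show $\id_{\N}\ltimes c_{-1}\in M_{\varphi,\psi}$ (your ``sign transfer'' equivalence $p_{k,a}c_{\delta}=q_{k,b}c_{\gamma}\Leftrightarrow p_{k,a}c_{-\delta}=q_{k,b}c_{-\gamma}$ is a cleaner repackaging of the paper's case analysis on $\{\widehat{p}(a_{\pm1})\}$ versus $\{\widehat{q}(a_{\pm1})\}$ and $m=\pm n$), then extend to a neighbourhood of $\id_{\N}$. The one notable difference is in the second step: the paper obtains the open neighbourhood in one line by observing that $\chi_{c_{-1}}\colon\tau\mapsto\tau\ltimes c_{-1}$ is continuous from $(\Inj(\N),\TT_{pw})$ to $(\End(\G),\TT_{pw})$ and that $M_{\varphi,\psi}$ is $\TT_{pw}\vert_{\End(\G)}$-open, whereas your ``local constancy'' argument explicitly unwinds this continuity by writing down the finite set $N$ of values that $\tau$ must fix.
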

Finally, we formulate a result for terms of different lengths.
\begin{lemma}\label{lem:zariski-sets-diff-length-dense}
  \hspace{0mm}
  \begin{enumerate}[label=(\roman*)]
  \item Let $\ell<k$ and let
    $\xi_{0},\dots,\xi_{k},\theta_{0},\dots,\theta_{\ell}\in\Inj(\N)$
    as well as
    $\tilde{\varphi}(\tau):=\xi_{k}\tau\xi_{k-1}\tau\dots\tau\xi_{0}$
    and
    $\tilde{\psi}(\tau):=\theta_{\ell}\tau\theta_{\ell-1}\tau\dots\tau\theta_{0}$,
    $\tau\in\Inj(\N)$.

    Then
    $M_{\tilde{\varphi},\tilde{\psi}}=\set{\tau\in\Inj(\N)}{\tilde{\varphi}(\tau)\neq\tilde{\psi}(\tau)}$
    is $\TT_{pw}\vert_{\Inj(\N)}$-dense and open.
  \item Let $\ell<k$ and let
    $p_{0},\dots,p_{k},q_{0},\dots,q_{\ell}\in\End(\G)$ as well as
    $\varphi(s):=p_{k}sp_{k-1}s\dots sp_{0}$ and
    $\psi(s):=q_{\ell}sq_{\ell-1}s\dots sq_{0}$, $s\in\End(\G)$.

    Then there exists a $\TT_{pw}\vert_{\Inj(\N)}$-dense and open set
    $V$ such that
    $\tau\ltimes t\in
    M_{\varphi,\psi}=\set{s\in\End(\G)}{\varphi(s)\neq\psi(s)}$ for
    all $\tau\in V$ and $t\in\End(\K_{2,\omega})$. In particular,
    $\tau\ltimes c_{-1}\in M_{\varphi,\psi}$ for all $\tau\in V$.
  \end{enumerate}
\end{lemma}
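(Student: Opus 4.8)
The plan is to establish~(i) directly by a step-by-step (greedy) construction and then to deduce~(ii) from~(i) with essentially no extra work. The \emph{openness} in~(i) is quick: for each fixed $n\in\N$ the value $\tilde\varphi(\tau)(n)$ is determined by the values of $\tau$ at the at most $k$ successive arguments visited when $\xi_{k}\tau\xi_{k-1}\tau\cdots\tau\xi_{0}$ is evaluated at $n$, so $\tau\mapsto\tilde\varphi(\tau)(n)$ depends on only finitely many coordinates of $\tau$; the same holds for $\tilde\psi$. Hence
\[
  M_{\tilde\varphi,\tilde\psi}=\bigcup_{n\in\N}\set{\tau\in\Inj(\N)}{\tilde\varphi(\tau)(n)\neq\tilde\psi(\tau)(n)}
\]
is a union of $\TT_{pw}$-open sets, hence $\TT_{pw}$-open.

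The substantial point is \emph{density} in~(i). Given a nonempty basic open set $U=\set{\tau\in\Inj(\N)}{\tau\text{ extends }\sigma_{0}}$ with $\sigma_{0}$ a finite partial injection, it suffices to construct a finite partial injection $\sigma\supseteq\sigma_{0}$ and a point $n_{0}\in\N$ so that $\tilde\varphi(\tau)(n_{0})\neq\tilde\psi(\tau)(n_{0})$ for \emph{every} injection $\tau$ extending $\sigma$ — then any such $\tau$, and these exist, lies in $U\cap M_{\tilde\varphi,\tilde\psi}$. I would pick $n_{0}$ outside the finite sets $\xi_{0}^{-1}(\Dom(\sigma_{0}))$ and $\theta_{0}^{-1}(\Dom(\sigma_{0}))$ and first trace the evaluation of $\tilde\psi(\tau)$ at $n_{0}$, which queries $\tau$ at $\ell$ successive points; at each genuinely new such point I extend $\sigma$ by a fresh value, distinct from everything used so far and, using injectivity of the maps $\theta_{i}$ and $\xi_{i}$, avoiding the finitely many values that would later force an argument into $\Dom(\sigma)$. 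After $\ell$ steps the number $c:=\tilde\psi(\tau)(n_{0})$ is pinned down and remains unaffected by any further extension of $\sigma$. Then I trace the evaluation of $\tilde\varphi(\tau)$ at $n_{0}$, which queries $\tau$ at $k$ successive points $u_{0},\dots,u_{k-1}$, extending $\sigma$ greedily in the same way. This is where $\ell<k$ enters: it forces $k\geq 1$, so the $\tilde\varphi$-trace performs at least one $\tau$-step, and the genericity arranged above ensures that its \emph{last} argument $u_{k-1}$ is not yet in $\Dom(\sigma)$ and is distinct from $u_{0},\dots,u_{k-2}$ — the only case needing comment being $\xi_{0}(n_{0})=\theta_{0}(n_{0})$, which puts $u_{0}$ into $\Dom(\sigma)$ but is absorbed since the first fresh value was also steered so that $\xi_{1}$ of it avoids $\Dom(\sigma)$. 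I finally set $\sigma(u_{k-1})$ to be a value that is fresh and moreover satisfies $\xi_{k}(\sigma(u_{k-1}))\neq c$, which rules out only finitely many candidates (at most one more because $\xi_{k}$ is injective) and is therefore possible. Then $\tilde\varphi(\tau)(n_{0})=\xi_{k}(\sigma(u_{k-1}))\neq c=\tilde\psi(\tau)(n_{0})$ for every $\tau\supseteq\sigma$.

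For~(ii) I would take $V:=M_{\tilde\varphi,\tilde\psi}$, which is $\TT_{pw}\vert_{\Inj(\N)}$-dense and open by~(i). For any $\tau\in\Inj(\N)$ and $t\in\End(\K_{2,\omega})$, composing the factors $p_{k},\tau\ltimes t,\dots,\tau\ltimes t,p_{0}$ and using the composition rule $\bigl(\bigsqcup_{i\in\N}^{\sigma}a_{i}\bigr)\circ\bigl(\bigsqcup_{i\in\N}^{\rho}b_{i}\bigr)=\bigsqcup_{i\in\N}^{\sigma\rho}(a_{\rho(i)}\circ b_{i})$ (a routine computation; see Notation~\ref{not:wreath-prod}) gives $\varphi(\tau\ltimes t)=\bigsqcup_{i\in\N}^{\tilde\varphi(\tau)}e_{i}$ for suitable $e_{i}\in\End(\K_{2,\omega})$; in particular $\varphi(\tau\ltimes t)$ acts on the first coordinate exactly as $\tilde\varphi(\tau)$, and likewise $\psi(\tau\ltimes t)$ acts as $\tilde\psi(\tau)$, in both cases independently of $t$. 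Hence for $\tau\in V$, i.e.~$\tilde\varphi(\tau)\neq\tilde\psi(\tau)$, the first-coordinate actions of $\varphi(\tau\ltimes t)$ and $\psi(\tau\ltimes t)$ already differ, so $\varphi(\tau\ltimes t)\neq\psi(\tau\ltimes t)$, i.e.~$\tau\ltimes t\in M_{\varphi,\psi}$, for every $t$; the assertion about $c_{-1}$ is the special case $t=c_{-1}$.

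I expect the main obstacle to be organising the greedy construction in~(i) so that the $\tilde\varphi$-trace is not stuck among already-assigned points before its final $\tau$-step; this is precisely the place where the hypothesis $\ell<k$ is used, and it is handled by choosing $n_{0}$ and all intermediate fresh values generically, exploiting that each $\xi_{i}$ and $\theta_{i}$ is injective so that every set one must avoid is finite. Everything else — the openness in~(i) and the reduction of~(ii) to~(i) — is routine bookkeeping.
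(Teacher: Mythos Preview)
Your overall plan is sound, openness in~(i) is fine, and your reduction of~(ii) to~(i) via $V:=M_{\tilde\varphi,\tilde\psi}$ and the first-coordinate computation is exactly what the paper does. The issue is in the density argument for~(i), specifically in how you handle collisions between the two traces and how you invoke $\ell<k$.

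You process the $\tilde\psi$-trace first and the $\tilde\varphi$-trace afterwards, and you assert that ``the only case needing comment'' is $\xi_{0}(n_{0})=\theta_{0}(n_{0})$, which you absorb by steering $w_{1}$ so that $\xi_{1}(w_{1})$ avoids $\Dom(\sigma)$. But $\Dom(\sigma)$ continues to grow: after you choose $w_{1}$, you still add $v_{1},\dots,v_{\ell-1}$ to it, and nothing you wrote prevents $\xi_{1}(w_{1})=v_{j}$ for some $j\geq 1$. More generally, if $\xi_{j}$ and $\theta_{j}$ agree at the relevant points for several initial indices (e.g.\ simply $\xi_{j}=\theta_{j}$ for $j=0,\dots,m-1$), then $u_{0}=v_{0},\dots,u_{m-1}=v_{m-1}$, and your argument must control $u_{m}=\xi_{m}(w_{m})$, not just $u_{1}$. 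Your phrasing ``$\ell<k$ enters: it forces $k\geq 1$'' is too weak to rescue this: if the role of $\ell<k$ were only that, your argument would apply verbatim to the case $k=\ell$, where the statement is false (take $\xi_{j}=\theta_{j}$ for all $j$). The real use of $\ell<k$ is that the $\tilde\varphi$-trace has strictly more $\tau$-steps than there are $v$-points, so after at most $\ell$ forced coincidences you still have at least one genuinely free step in which to steer the outcome away from~$c$.

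The paper avoids this bookkeeping by running the two traces \emph{in parallel}: at stage $i\leq\ell$ it picks $x_{i},y_{i}$ simultaneously with $x_{i}=y_{i}$ iff $x'_{i-1}=y'_{i-1}$, and it chooses $x'_{i},y'_{i}$ fresh relative to all earlier $x'_{j},y'_{j}$, so the only possible collision is at the same index. Then for $\ell<i\leq k$ only the $\tilde\varphi$-trace continues, and at the final step one simply requires $x'_{k}\neq y'_{\ell}$. Your sequential approach can be repaired---for instance by steering \emph{each} $w_{j}$ so that $\xi_{j}(w_{j})$ avoids $\Dom\sigma_{0}\cup\{v_{0},\dots,v_{j-1}\}$ and additionally choosing later $v_{j'}$ to avoid all already-computed shadow values $\xi_{i}(w_{i})$---but you should spell this out and state the correct role of $\ell<k$.
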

We first demonstrate how these auxiliary statements are used and prove
Theorem~\ref{thm:counterexample} before showing the statements
themselves in Subsection~\ref{sec:proof-details}.
\begin{proof}[Proof (of Theorem~\ref{thm:counterexample} given
  Lemmas~\ref{lem:zariski-sets-equal-length-different-open-nbh},~\ref{lem:zariski-sets-equal-length-equal-open-nbh}
  and~\ref{lem:zariski-sets-diff-length-dense})]
  We will show that any $\TT_{\text{Zariski}}$-open set $O$ containing
  $\id_{\N}\ltimes c_{+1}$ also contains $\tau\ltimes c_{-1}$ for some
  $\tau\in\Inj(\N)$. This implies in particular that the
  $\TT_{pw}$-open set $\set{s\in\End(\G)}{s(0,a_{+1})=(0,a_{+1})}$
  cannot be $\TT_{\text{Zariski}}$-open -- proving
  $\TT_{\text{Zariski}}\neq\TT_{pw}$.

  It suffices to consider $\TT_{\text{Zariski}}$-\emph{basic} open
  sets $O$, i.e.
  $O=\bigcap_{h\in H}M_{\varphi_{h},\psi_{h}}\ni\id_{\N}\ltimes
  c_{+1}$ for some finite set $H$. If the terms $\varphi_{h}$ and
  $\psi_{h}$ have equal length, we apply
  Lemma~\ref{lem:zariski-sets-equal-length-different-open-nbh} or
  \ref{lem:zariski-sets-equal-length-equal-open-nbh} to find a
  $\TT_{pw}\vert_{\Inj(\N)}$-open neighbourhood $U_{h}$ of $\id_{\N}$
  such that $\tau\ltimes c_{-1}\in M_{\varphi_{h},\psi_{h}}$ for all
  $\tau\in U_{h}$. If $\varphi_{h}$ and $\psi_{h}$ have different
  lengths, we instead apply\footnote{If $\psi_{h}$ is longer than
    $\varphi_{h}$, we exchange these two terms.}
  Lemma~\ref{lem:zariski-sets-diff-length-dense} to find a
  $\TT_{pw}\vert_{\Inj(\N)}$-dense and open set $V_{h}$ such that
  $\tau\ltimes c_{-1}\in M_{\varphi_{h},\psi_{h}}$ for all
  $\tau\in V_{h}$. Intersecting the respective sets $U_{h}$ and
  $V_{h}$ thus obtained yields a $\TT_{pw}$-open neighbourhood $U$ of
  $\id_{\N}$ and a $\TT_{pw}\vert_{\Inj(\N)}$-dense and open set $V$
  such that $\tau\ltimes c_{-1}\in M_{\varphi_{h},\psi_{h}}$ for all
  $\tau\in U$ whenever $\varphi_{h}$ and $\psi_{h}$ have equal length
  and such that $\tau\ltimes c_{-1}\in M_{\varphi_{h},\psi_{h}}$ for
  all $\tau\in V$ whenever $\varphi_{h}$ and $\psi_{h}$ have different
  lengths. The intersection $U\cap V$ is nonempty; for any
  $\tau\in U\cap V$ we have
  $\tau\ltimes c_{-1}\in M_{\varphi_{h},\psi_{h}}$ for all $h\in H$,
  i.e. $\tau\ltimes c_{-1}\in O$. This concludes the proof.
\end{proof}
\begin{remark}
  A slight refinement of this proof even shows that the Zariski
  topology on $\End(\G)$ is not Hausdorff since
  $\id_{\N}\ltimes c_{+1}$ and $\id_{\N}\ltimes c_{-1}$ cannot be
  separated by open sets: By the proof, a given basic open set around
  $\id_{\N}\ltimes c_{+1}$ contains $\tau\ltimes c_{-1}$ provided that
  $\tau$ is an element of the intersection of a certain
  $\TT_{pw}\vert_{\Inj(\N)}$-open neighbourhood of $\id_{\N}$ and a
  $\TT_{pw}\vert_{\Inj(\N)}$-dense open set. The same idea similarly
  (but with an easier proof in the analogue of
  Lemma~\ref{lem:zariski-sets-equal-length-equal-open-nbh}) yields
  that a given basic open set around $\id_{\N}\ltimes c_{-1}$ contains
  $\tau'\ltimes c_{-1}$ provided that $\tau'$ is an element of the
  intersection of another $\TT_{pw}\vert_{\Inj(\N)}$-open
  neighbourhood of $\id_{\N}$ and another
  $\TT_{pw}\vert_{\Inj(\N)}$-dense open set. The intersection of these
  four sets is nonempty, so the basic open sets around
  $\id_{\N}\ltimes c_{+1}$ and $\id_{\N}\ltimes c_{-1}$ contain a
  common element (namely $\tau\ltimes c_{-1}$ for a certain
  $\tau\in\Inj(\N)$).
\end{remark}
The preceding remark suggests the following refinement of
Question~\ref{q:ejmmp}:
\begin{question}
  Is there an $\omega$-categorical (transitive?) relational structure
  $\A$ such that there exists a Hausdorff (even Polish?) semigroup
  topology on $\End(\A)$ which is \emph{not} finer than the topology
  of pointwise convergence?
\end{question}

\subsection{Proof details}
\label{sec:proof-details}
In this subsection, we prove
Lemmas~\ref{lem:zariski-sets-equal-length-different-open-nbh},~\ref{lem:zariski-sets-equal-length-equal-open-nbh}
and~\ref{lem:zariski-sets-diff-length-dense} in sequence.
\begin{proof}[Proof (of
  Lemma~\ref{lem:zariski-sets-equal-length-different-open-nbh})]
  $ $\par\nobreak\ignorespaces\textbf{(i).} The set
  $M_{\tilde{\varphi},\tilde{\psi}}\subseteq\Inj(\N)$ is open with
  respect to $\TT_{pw}\vert_{\Inj(\N)}$ since $\tilde{\varphi}$ and
  $\tilde{\psi}$ are continuous with respect to
  $\TT_{pw}\vert_{\Inj(\N)}$.

  \textbf{(ii).} Set $U:=M_{\tilde{\varphi},\tilde{\psi}}$ and note
  that if $u:=\varphi(\tau\ltimes t)$ and $v:=\psi(\tau\ltimes t)$,
  then
  $\tilde{u}=\tilde{\varphi}(\tau)\neq\tilde{\psi}(\tau)=\tilde{v}$,
  so $u\neq v$.
\end{proof}
The second lemma requires more work.
\begin{proof}[Proof (of
  Lemma~\ref{lem:zariski-sets-equal-length-equal-open-nbh})]
  We start by fixing some notation. We first write
  $p_{j}=\bigsqcup_{i\in\N}^{\xi_{j}}p_{j,i}$,
  $q_{j}=\bigsqcup_{i\in\N}^{\theta_{j}}q_{j,i}$ (so
  $\xi_{j}=\tilde{p}_{j}$, $\theta_{j}=\tilde{q}_{j}$) and
  $\delta:=\tilde{\varphi}(\id_{\N})=\tilde{\psi}(\id_{\N})$. Further,
  we define $\Xi_{j}:=\xi_{j}\xi_{j-1}\dots\xi_{0}$ as well as
  $\Theta_{j}:=\theta_{j}\theta_{j-1}\dots\theta_{0}$,
  $j=0,\dots,k$. In particular, $\Xi_{k}=\Theta_{k}=\delta$. Let the
  two (distinct, by assumption) functions
  $\varphi(\id_{\N}\ltimes c_{+1})$ and $\psi(\id_{\N}\ltimes c_{+1})$
  differ at the point $(h,x)\in G$. Further, set $e\in\{-1,+1\}$ such
  that $x\in A_{e}$ and choose any $x'\in A_{-e}$.

  In the course of the proof, we will require the explicit expansions
  of the compositions in $\varphi(\id_{\N}\ltimes c_{\pm 1})$ and
  $\psi(\id_{\N}\ltimes c_{\pm 1})$:
  \begin{align*}
    \varphi(\id_{\N}\ltimes c_{\pm
    1})&=\bigsqcup_{i\in\N}\!^{\delta}p_{k,\Xi_{k-1}(i)}c_{\pm
         1}p_{k-1,\Xi_{k-2}(i)}\dots
         c_{\pm 1}p_{0,i}\\
    \psi(\id_{\N}\ltimes c_{\pm
    1})&=\bigsqcup_{i\in\N}\!^{\delta}q_{k,\Theta_{k-1}(i)}c_{\pm
         1}q_{k-1,\Theta_{k-2}(i)}\dots c_{\pm 1}q_{0,i}.
  \end{align*}
 
  We proceed in two steps -- first, we show that
  $\id_{\N}\ltimes c_{-1}\in M_{\varphi,\psi}$; second, we extend this
  to $\tau\ltimes c_{-1}$ for all $\tau$ in an appropriately
  constructed $\TT_{pw}\vert_{\Inj(\N)}$-open neighbourhood of
  $\id_{\N}$.

  \textbf{(1).} $\id_{\N}\ltimes c_{-1}\in M_{\varphi,\psi}$: We
  compare $\varphi(\id_{\N}\ltimes c_{\pm 1})$ and
  $\psi(\id_{\N}\ltimes c_{\pm 1})$ at $(h,x)$ as well as $(h,x')$. In
  order to simplify notation, we define\footnote{$m$ and $n$ count how
    many times the fixed functions (except for the outermost ones)
    involved in evaluating $\varphi(\id_{\N}\ltimes c_{\pm 1})$ and
    $\psi(\id_{\N}\ltimes c_{\pm 1})$ switch the parts of the $h$-th
    copy of $\K_{2,\omega}$.}
  \begin{align*}
    m&:=\sgn(p_{k-1,\Xi_{k-2}(h)})\cdot\sgn(p_{k-2,\Xi_{k-3}(h)})\cdot\ldots\cdot\sgn(p_{0,h})\\
    n&:=\sgn(q_{k-1,\Theta_{k-2}(h)})\cdot\sgn(q_{k-2,\Theta_{k-3}(h)})\cdot\ldots\cdot\sgn(q_{0,h})\\
    \widehat{p}&:=p_{k,\Xi_{k-1}(h)}\\
    \widehat{q}&:=q_{k,\Xi_{k-1}(h)}
  \end{align*}
  and conclude
  \begin{align*}
    [\varphi\left(\id_{\N}\ltimes c_{+1}\right)](h,x) &
                                                        =\Big(\delta(h),\widehat{p}(a_{me})\Big),
    & [\varphi\left(\id_{\N}\ltimes c_{-1}\right)](h,x) &
                                                          =\Big(\delta(h),\widehat{p}(a_{me(-1)^{k-1}})\Big)\\
    [\psi\left(\id_{\N}\ltimes
    c_{+1}\right)](h,x)&=\Big(\delta(h),\widehat{q}(a_{ne})\Big),&\
                                                                   [\psi\left(\id_{\N}\ltimes c_{-1}\right)](h,x)&=\Big(\delta(h),\widehat{q}(a_{ne(-1)^{k-1}})\Big)\\
    [\varphi\left(\id_{\N}\ltimes
    c_{+1}\right)](h,x')&=\Big(\delta(h),\widehat{p}(a_{-me})\Big),&\ [\varphi\left(\id_{\N}\ltimes
                                                                     c_{-1}\right)](h,x')&=\Big(\delta(h),\widehat{p}(a_{-me(-1)^{k-1}})\Big)\\
    [\psi\left(\id_{\N}\ltimes
    c_{+1}\right)](h,x')&=\Big(\delta(h),\widehat{q}(a_{-ne})\Big),&\
                                                                     [\psi\left(\id_{\N}\ltimes c_{-1}\right)](h,x')&=\Big(\delta(h),\widehat{q}(a_{-ne(-1)^{k-1}})\Big)
  \end{align*}
  If
  \begin{displaymath}
    \big\{\widehat{p}(a_{+1}),\widehat{p}(a_{-1})\big\}\neq\big\{\widehat{q}(a_{+1}),\widehat{q}(a_{-1})\big\},
  \end{displaymath}
  then $\varphi(\id_{\N}\ltimes c_{-1})$ and
  $\psi(\id_{\N}\ltimes c_{-1})$ cannot coincide on both $(h,x)$ and
  $(h,x')$, so $\id_{\N}\ltimes c_{-1}\in M_{\varphi,\psi}$ as
  claimed.

  In case of
  \begin{displaymath}
    \big\{\widehat{p}(a_{+1}),\widehat{p}(a_{-1})\big\}=\big\{\widehat{q}(a_{+1}),\widehat{q}(a_{-1})\big\},
  \end{displaymath}
  we distinguish further: If $m=n$, then
  $[\varphi(\id_{\N}\ltimes c_{+1})](h,x)\neq[\psi(\id_{\N}\ltimes
  c_{+1})](h,x)$ shows
  \begin{displaymath}
    \widehat{p}(a_{+1})=\widehat{q}(a_{-1})\quad\text{as
      well as}\quad
    \widehat{p}(a_{-1})=\widehat{q}(a_{+1})
  \end{displaymath}
  which leads to\footnote{Here we use that $\varphi$ and $\psi$ have
    equal lengths (or more precisely: lengths of equal parity).}
  $[\varphi(\id_{\N}\ltimes c_{-1})](h,x)\neq[\psi(\id_{\N}\ltimes
  c_{-1})](h,x)$, so $\id_{\N}\ltimes c_{-1}\in M_{\varphi,\psi}$ as
  claimed. If on the other hand $m=-n$, then we analogously obtain
  \begin{displaymath}
    \widehat{p}(a_{+1})=\widehat{q}(a_{+1})\quad\text{as
      well as}\quad
    \widehat{p}(a_{-1})=\widehat{q}(a_{-1})
  \end{displaymath}
  and
  $[\varphi(\id_{\N}\ltimes c_{-1})](h,x)\neq[\psi(\id_{\N}\ltimes
  c_{-1})](h,x)$, so $\id_{\N}\ltimes c_{-1}\in M_{\varphi,\psi}$ as
  claimed.

  \textbf{(2).}  There exists a $\TT_{pw}\vert_{\Inj(\N)}$-open
  neighbourhood $U\subseteq\Inj(\N)$ of $\id_{\N}$ such that
  $\tau\ltimes c_{-1}\in M_{\varphi,\psi}$ for all $\tau\in U$: One
  immediately checks that for arbitrary $t\in\End(\K_{2,\omega})$, the
  map $\chi_{t}\colon\Inj(\N)\to\End(\G)$,
  $\chi_{t}(\tau):=\tau\ltimes t$ is continuous with respect to
  $\TT_{pw}\vert_{\Inj(\N)}$ and\footnote{Caution! We briefly consider
    the pointwise topology on $\End(\G)$ instead of the Zariski
    topology.} $\TT_{pw}\vert_{\End(\G)}$. Since $M_{\varphi,\psi}$ is
  open with respect to $\TT_{pw}\vert_{\End(\G)}$, the preimage
  $U:=\chi_{c_{-1}}^{-1}(M_{\varphi,\psi})\subseteq\Inj(\N)$ is open
  with respect to $\TT_{pw}\vert_{\Inj(\N)}$. By~(1), the set $U$
  contains $\id_{\N}$ -- completing the proof.
\end{proof}
Finally, we show the third lemma.
\begin{proof}[Proof (of
  Lemma~\ref{lem:zariski-sets-diff-length-dense})]
  $ $\par\nobreak\ignorespaces\textbf{(i).}  We have to prove that for
  two tuples $\bar{z},\bar{w}$ of the same length such that $\bar{w}$
  does not contain the same value twice (since we are working in
  $\Inj(\N)$), the intersection
  $\set{\tau\in\Inj(\N)}{\tau(\bar{z})=\bar{w}}\cap
  M_{\tilde{\varphi},\tilde{\psi}}$ is nonempty. The idea behind the
  proof is to find an element $x_{0}\in\N$ and inductively construct a
  partial injection $\widehat{\tau}$ which extends
  $\bar{z}\mapsto\bar{w}$ such that the values
  \begin{displaymath}
    [\tilde{\varphi}(\widehat{\tau})](x_{0})=\xi_{k}\widehat{\tau}\xi_{k-1}\widehat{\tau}\dots\widehat{\tau}\xi_{0}(x_{0})\quad\text{and}\quad [\tilde{\psi}(\widehat{\tau})](x_{0})=\theta_{\ell}\widehat{\tau}\theta_{\ell-1}\widehat{\tau}\dots\widehat{\tau}\theta_{0}(x_{0})
  \end{displaymath}
  are welldefined (i.e. $\xi_{0}(x_{0})\in\Dom(\widehat{\tau})$,
  $\xi_{1}\widehat{\tau}\xi_{0}(x_{0})\in\Dom(\widehat{\tau})$ etc.)
  and
  $[\tilde{\varphi}(\widehat{\tau})](x_{0})\neq
  [\tilde{\psi}(\widehat{\tau})](x_{0})$. This gives
  $\tau(\bar{z})=\bar{w}$ and
  $\tau\in M_{\tilde{\varphi},\tilde{\psi}}$ for any $\tau\in\Inj(\N)$
  extending $\widehat{\tau}$.

  More precisely, we will define (not necessarily distinct) elements
  $x_{0},\dots,x_{k},x'_{0},\dots,x'_{k}\in\N$ and
  $y_{0}\dots,y_{\ell},y'_{0},\dots,y'_{\ell}\in\N$ such that
  \begin{enumerate}[label=(\arabic*)]
  \item\label{item:proof-zariski-sets-diff-length-dense-i}
    $x_{0}=y_{0}$.
  \item\label{item:proof-zariski-sets-diff-length-dense-ii}
    $x'_{j}=\xi_{j}(x_{j})$ for all $j=0,\dots,k$.
  \item\label{item:proof-zariski-sets-diff-length-dense-iii}
    $y'_{j}=\theta_{j}(y_{j})$ for all $j=0,\dots,\ell$.
  \item\label{item:proof-zariski-sets-diff-length-dense-iv}
    $\widehat{\tau}$ defined by $\bar{z}\mapsto\bar{w}$,
    $(x'_{0},\dots,x'_{k-1})\mapsto (x_{1},\dots,x_{k})$,
    $(y'_{0},\dots,y'_{\ell-1})\mapsto (y_{1},\dots,y_{\ell})$ is a
    welldefined\footnote{This means that if e.g. $x'_{0}=y'_{0}$, then
      $x_{1}=y_{1}$.} partial injection.
  \item\label{item:proof-zariski-sets-diff-length-dense-v}
    $x'_{k}\neq y'_{\ell}$. (This will crucially depend on the
    assumption $\ell<k$.)
  \end{enumerate}
  We first pick $x_{0}=y_{0}\in\N$ such that
  $x'_{0}:=\xi_{0}(x_{0})\notin\bar{z}$ and
  $y'_{0}:=\theta_{0}(y_{0})\notin\bar{z}$; this is possible since the
  set $\xi_{0}^{-1}(\bar{z})\cup\theta_{0}^{-1}(\bar{w})$ of forbidden
  points is finite by injectivity of $\xi_{0}$ and $\theta_{0}$. Note
  that $x'_{0}$ and $y'_{0}$ are not necessarily different (in
  particular, $\xi_{0}=\theta_{0}$ is possible).

  Suppose that $1\leq i\leq\ell$ and that
  $x_{0},\dots,x_{i-1},x'_{0},\dots,x'_{i-1}$ as well as
  $y_{0},\dots,y_{i-1},y'_{0},\dots,y'_{i-1}$ are already defined such
  that
  \ref{item:proof-zariski-sets-diff-length-dense-i}-\ref{item:proof-zariski-sets-diff-length-dense-iv}
  hold (with $i-1$ in place of both $k$ and $\ell$). We abbreviate
  $X_{i-1}:=\{x_{0},\dots,x_{i-1}\},X'_{i-1}:=\{x'_{0},\dots,x'_{i-1}\}$
  and
  $Y_{i-1}:=\{y_{0},\dots,y_{i-1}\},Y'_{i-1}:=\{y'_{0},\dots,y'_{i-1}\}$.
  Pick $x_{i},y_{i}\notin\bar{w}\cup X_{i-1}\cup Y_{i-1}$ such that
  $x'_{i}:=\xi_{i}(x_{i})\notin\bar{z}\cup X'_{i-1}\cup Y'_{i-1}$ and
  $y'_{i}:=\theta_{i}(y_{i})\notin\bar{z}\cup X'_{i-1}\cup Y'_{i-1}$
  with the additional property that\footnote{This ensures that
    $\widehat{\tau}$ is welldefined and injective.} $x_{i}$ and
  $y_{i}$ are chosen to be distinct if and only if $x'_{i-1}$ and
  $y'_{i-1}$ are distinct (to obtain a welldefined partial injection
  in~\ref{item:proof-zariski-sets-diff-length-dense-iv}). As with the
  construction of $x_{0}$ above, this is possible by finiteness of the
  forbidden sets.

  If $\ell+1\leq i\leq k$ and if
  $x_{0},\dots,x_{i-1},x'_{0},\dots,x'_{i-1}$ as well as
  $y_{0},\dots,y_{\ell},y'_{0},\dots,y'_{\ell}$ are already defined
  such that
  \ref{item:proof-zariski-sets-diff-length-dense-i}-\ref{item:proof-zariski-sets-diff-length-dense-iv}
  hold (with $i-1$ in place of $k$), then we again abbreviate
  $X_{i-1}:=\{x_{0},\dots,x_{i-1}\},X'_{i-1}:=\{x'_{0},\dots,x'_{i-1}\}$
  and
  $Y_{\ell}:=\{y_{0},\dots,y_{\ell}\},Y'_{\ell}:=\{y'_{0},\dots,y'_{\ell}\}$. Analogously
  to the previous step, we pick
  $x_{i}\notin\bar{w}\cup X_{i-1}\cup Y_{\ell}$ such that
  $x'_{i}:=\xi_{i}(x_{i})\notin\bar{z}\cup X'_{i-1}\cup
  Y'_{\ell}$. Note that in the final step $i=k$, we are picking
  $x_{k}$ such that\footnote{At this point, it is crucial that
    $\ell<k$ since we would never enter the second phase
    $\ell+1\leq i\leq k$ of the construction otherwise (more
    precisely, if $\ell$ were equal to $k$, we would have to determine
    $x'_{k}$ at the same time as $y'_{\ell}$ and could not make sure
    that they are different).}
  $x'_{k}\notin\bar{z}\cup X'_{k-1}\cup Y'_{\ell}$. In particular, we
  require $x'_{k}\neq y'_{\ell}$,
  i.e.~\ref{item:proof-zariski-sets-diff-length-dense-v}.

  \textbf{(ii).}  The set
  $V:=M_{\tilde{\varphi},\tilde{\psi}}\subseteq\Inj(\N)$ is
  $\TT_{pw}\vert_{\Inj(\N)}$-dense by the first statement and clearly
  $\TT_{pw}\vert_{\Inj(\N)}$-open. For $\tau\in V$, we set
  $u:=\varphi(\tau\ltimes c_{-1})$ as well as
  $v:=\psi(\tau\ltimes c_{-1})$ and note that
  $\tilde{u}=\tilde{\varphi}(\tau)\neq\tilde{\psi}(\tau)=\tilde{v}$. This
  yields $\tau\ltimes c_{-1}\in M_{\varphi,\psi}$ as desired.
  
\end{proof}

\bibliographystyle{alpha}
\bibliography{global.bib}
\end{document}